\newtheorem{theorem}{Theorem}[section]
\newtheorem{lemma}[theorem]{Lemma}
\newtheorem{proposition}[theorem]{Proposition}
\newtheorem{corollary}[theorem]{Corollary}
\newtheorem{definition}[theorem]{Definition}
\newtheorem{remark}[theorem]{Remark}
\newtheorem{example}[theorem]{Example}
\newcommand{\Z}{\mathbb{Z}}
\newcommand{\id}{\operatorname{id}}
\newcommand{\Sym}{\operatorname{Sym}}
\newcommand{\GK}{\operatorname{GKdim}}
\newcommand{\cl}{\operatorname{clKdim}}
\newcommand{\Aut}{\operatorname{Aut}}
\newcommand{\Ret}{\operatorname{Ret}}
\newcommand{\gr}{\operatorname{gr}}
\newcommand{\SD}{\operatorname{SD}}
\newcommand{\Mod}{\operatorname{mod}}
\newcommand{\free}[1]{\langle #1 \rangle}
\newenvironment{proof}{\par\noindent{\bf Proof.}}{$\qed$\par\bigskip}
\newcommand{\qed}{\enspace\vrule  height6pt  width4pt  depth2pt}
\begin{document}

\title{Set-theoretic solutions of the Yang--Baxter equation, associated quadratic algebras and the minimality condition\thanks{The first author was partially supported by the grants
MINECO-FEDER  MTM2017-83487-P and AGAUR 2017SGR1725 (Spain). The
second author is supported in part by Onderzoeksraad of Vrije
Universiteit Brussel and Fonds voor Wetenschappelijk Onderzoek
(Belgium). The third author is supported by the National Science
Centre  grant 2016/23/B/ST1/01045 (Poland).}}
\author{F. Ced\'o \and E. Jespers \and J. Okni\'{n}ski}
\date{}

\maketitle

\begin{abstract}
Given a finite non-degenerate set-theoretic solution $(X,r)$ of
the Yang-Baxter equation and a field $K$, the structure
$K$-algebra of $(X,r)$  is $A=A(K,X,r)=K\langle X\mid xy=uv \mbox{
whenever }r(x,y)=(u,v)\rangle$. Note that $A=\oplus_{n\geq 0} A_n$
is a graded algebra, where $A_n$ is the linear span of all the
elements $x_1\cdots x_n$, for  $x_1,\dots ,x_n\in X$. One of the
known results asserts that the maximal possible value of $\dim
(A_2)$ corresponds to involutive solutions and implies several
deep and important properties of $A(K,X,r)$. Following recent
ideas of Gateva-Ivanova \cite{GI2018}, we focus on the minimal
possible values of the dimension of $A_2$. We determine lower
bounds and completely classify solutions $(X,r)$ for which these
bounds are attained in the general case and also in the
square-free case. This is done in terms of the so called derived
solution, introduced by Soloviev and closely related with racks
and quandles. Several problems posed in \cite{GI2018} are solved.
\end{abstract}

\section{Introduction}

Drinfeld \cite{drinfeld} in 1992  initiated the investigations of
the set-theoretic solutions of the Yang-Baxter equation on a
non-empty set $X$. Let $(X,r)$ be a quadratic set, that is, $r$ is a
bijective map $r:X\times X \rightarrow X\times X$. For $x,y\in X$ we
put $r(x,y) =(\sigma_x (y), \gamma_y (x))$. One says that $(X,r)$ is
non-degenerate if all the maps $\sigma_x$ and $\gamma_y$ are
bijective and $(X,r)$ is said to be involutive if
 $r^{2}=\id$.
If
  $$r_1 r_2 r_1 =r_2 r_1 r_2,$$
where $r_1=r\times \id_X$ and $r_2=\id_X\times r$ are maps from
$X^3$ to itself, then $(X,r)$ is said to be a braided set.
A braided set also is said to be  a set-theoretic solution of the Yang-Baxter
equation (see \cite{ESS, GIVdB}).

Many fundamental results in this area have been obtained by
Etingof, Schedler and Soloviev, Gateva-Ivanova and Van den Bergh,
Lu, Yan and Zhu; we refer the reader to \cite{ESS, GIVdB, LYZ,
Soloviev}. In \cite{ESS} Etingof, Schedler and Soloviev and in
\cite{GIVdB} Gateva-Ivanova and Van den Bergh  introduced the
following algebraic structures associated to a set-theoretic
solution $(X,r)$: the structure group $G(X,r)=\gr(X\mid
xy=uv\text{ if }r(x,y)=(u,v))$, the structure monoid
$M(X,r)=\free{X \mid xy=uv\text{ if }r(x,y)=(u,v)}$ and, for a
field $K$,  the structure algebra $A(K,X,r)$. The latter is the
$K$-algebra generated by $X$ with the ``same'' defining relations
as the monoid $M(X,r)$. Clearly $A(K,X,r)=K[M(X,r)]$, a monoid
algebra.  Of course, to define all these algebraic structures it
is sufficient that $(X,r)$ is a quadratic set.

The investigations have intensified since the discovery of  new
algebraic structures that are related to a set-theoretic solution
and that determine all non-degenerate  bijective (involutive)
solutions on a finite set (see for example \cite{B4, BCJ, Ven1,
Rump}).  Namely, to deal with involutive non-degenerate solutions
Rump \cite{Rump} introduced the algebraic structure called a
(left) brace and to deal with arbitrary non-degenerate solutions
Guarnieri and Vendramin \cite{Ven1} extended this notion to that
of a skew brace. In recent years skew braces have lead to new
methods that gave progress in the area of the Yang-Baxter
equation, in particular allowing to construct several new families
of solutions \cite{B3,BCJO,BCJO2,CJO2,SmSm}, but also allowed to
answer some questions in group and ring theory; see for example
Bachiller \cite{B2}, Bachiller, Ced\'o  and Vendramin \cite{BCV},
Catino, Colazzo and Stefanelli \cite{CCS},   Childs \cite{Ch},
Lebed \cite{Lebed}, Smoktunowicz \cite{Smok},  Smoktunowicz and
Vendramin \cite{SV}.

Suppose $(X,r)$ is a finite quadratic set. Since the defining
relations are homogeneous, the structure algebra $A=A(K,X,r)$ is
quadratic and  has a natural gradation (by length of elements in
$M(X,r)$) and clearly $A$ is a connected graded $K$-algebra. So,
$A=\oplus_{n\geq 0}A_n$ is generated by $A_1 =\mbox{span}_{K}(X)$
and each $A_n$ is finite dimensional. For more information on
quadratic algebras we refer to \cite{polpos}.

In the last two decades there was an intense study of monoids,
groups  and algebras associated with involutive quadratic sets, see
for example \cite{CJO04, CJO06, GI96, GI2012, GIAdv2018, GIJO,
JObook}.  An important particular case of these are the structure
monoid and algebra of an involutive non-degenerate set-theoretic
solution $(X,r)$ of the Yang-Baxter equation \cite{ESS, GIVdB}. In
this case, if $X$ is finite of cardinality $n$, then there are $n$
singleton orbits and $n\choose 2$ orbits of cardinality $2$ in $X^2$
under the action of the group $\langle r\rangle$ (we simply call
them $r$-orbits) and $\dim A(K,X,r)_{2} =n +{n\choose 2}$. Moreover,
the structure algebra $A(K,X,r)$, over a field $K$, shares many of
the properties of the polynomial algebra $K[x_1,\dots,x_n]$. In
particular, it is a domain, it satisfies a polynomial identity and
it has Gelfand-Kirillov dimension $n$. The structure monoid $M(X,r)$
is embedded in the structure group $G(X,r)=\gr((x,\sigma_x) \mid
x\in X) \subseteq \Z^{n}\rtimes \Sym_{X}$, where $\Sym_{X}$ acts
naturally on the free abelian group $\Z^n$ of rank $n$. Also, in
this case, the structure group $G(X,r)$ is solvable, torsion-free
and a Bieberbach group with a free abelian subgroup of rank $n$ of
finite index.

Gateva-Ivanova and Majid \cite{GIM}, Guarnieri and Vendramin in
\cite{Ven1}, Lebed and Vendramin in \cite{LebedVendramin} and
Bachiller in \cite{B4} continued the study of the structure of
$M(X,r)$ and $G(X,r)$ for arbitrary finite non-degenerate braided
sets $(X,r)$ initiated by Lu, Yau and Zhu in \cite{LYZ} and
Soloviev in \cite{Soloviev}. Such solutions were linked to self
distributive solutions (abbreviated $\SD$)  in
\cite{ESS,LebedVendramin,LYZ,Soloviev}. Recall from \cite{GI2018}
that a quadratic set $(X,r)$ is said to be $\SD$ if all $\gamma_y
=\id$, that is $r(x,y)=(\sigma_x(y),x)$, for $x,y \in X$.

Recall that a rack is a set $X$ with a binary operation $\lhd$ such
that $x\lhd (y\lhd z)=(x\lhd y)\lhd (x\lhd z)$, for all $x,y,z\in
X$, and the maps $y\mapsto x\lhd y$ are bijective for all $x\in X$.
Racks are related with pointed Hopf algebras \cite{AG}. A quandle is
a rack $(X,\lhd)$, such that $x\lhd x=x$ for all $x\in X$. Quandles
were introduced by Joyce in \cite{J1} and provide an important tool
in knot theory (see for example, \cite{CS,LV}).  It is known and
easy to prove that $(X,r)$  is a non-degenerate $\SD$ braided set if
and only if $(X,\lhd)$ is a rack, where $x\lhd y=\sigma_x(y)$ for
all $x,y\in X$. Furthermore, quandles correspond to non-degenerate
square-free $\SD$ braided sets.

Given a non-degenerate braided set $(X,r)$, Soloviev in
\cite{Soloviev} introduced its derived solution $(X,r')$  as
follows.  Write $r(x,y)=(\sigma_x (y), \gamma_y (x))$ for all
$x,y\in X$. Then
 $$r'(x,y) =(\gamma_x (\sigma_{\gamma^{-1}_{y}(x)}(y)), x)$$
for all $x,y\in X$. Put $\sigma'_x(y) = \gamma_x
(\sigma_{\gamma^{-1}_{y}(x)}(y))$ for $x,y\in X$. Then, for all
$x,y\in X$,
  $$r'(x,y)=(\sigma'_x (y),x)$$
and $(X,r')$ is  a  non-degenerate $\SD$ braided set (see
\cite[Theorem~2.3]{Soloviev}). The structure monoids $M(X,r)$ and
$M(X,r')$, as well as their respective algebras, are strongly
related  (see \cite[Proposition~1.4]{JKV}). Indeed, there exists
an action $\theta : M(X,r) \rightarrow \Aut(M(X,r'))$ and a
bijective $1$-cocycle $\phi: M(X,r) \longrightarrow M(X,r')$ with
respect to $\theta$, that is, $\phi (ab) =\phi (a) \theta (a)(\phi
(b))$ for $a,b\in M(X,r)$, satisfying $\theta (x)(y) =\sigma_x(y)$
and $\phi (x) =x$ (for all $x,y\in X$). Furthermore, $\phi$ is
degree preserving and $f(x) =(\phi (x), \theta (x))$ for $x\in
M(X,r)$ defines an injective monoid morphism
$$f:M(X,r) \longrightarrow M(X,r') \rtimes \Aut(M(X,r')).$$ In
particular,
\begin{eqnarray*}
M(X,r) &\cong& f(M(X,r))\;
     =\; \{ (\phi(a),\theta (a)) \mid a\in M(X,r)\} \subseteq
  M(X,r') \rtimes \Aut(M(X,r')).
  \end{eqnarray*}

Suppose that $(X,r)$ is a finite non-degenerate braided set.
Put $M=M(X,r)$ and $M'=M(X,r')$. Both monoids are graded monoids
(graded by the length of elements). So, $M=\bigcup_{m}M_m$, where
$M_m$ denotes  the set of elements of $M$ of length $m$.
Similarly, $M'=\bigcup_{m}M'_m$. Clearly, the structure algebras
$A=A(K,X,r)=\oplus_{m\geq 0} A_m$ and $A'=A(K,X,r')=\oplus_{m\geq
0}A'_{m}$ are connected graded $K$-algebras with
    $$\dim (A_m) =\dim (A'_{m})=|M_m| =|M'_m|,$$
for all non-negative  integers $m$.  In  particular, $\dim (A_2)
=\dim (A'_2)$ is the number of $r$-orbits in $X^2$ and the number of
$r'$-orbits in $X^2$. In the monoid $M'=M(X,r')$ we have that
$M'a=aM'$ for all $a\in M'$. Hence $|M'_2|$ is at most the number of
words of length $2$ in the free abelian monoid of rank $|X|=n$. Thus
  $$\dim A_2  \leq n +{n\choose 2}.$$
It is easy to describe when this upper bound is reached; we call this the
maximality condition for $A(K,X,r)$ (and $M(X,r)$). For this,
denote by $O_{(x,y)}$ the $r'$-orbit of $(x,y)\in X^2$ and assume
$\dim A_2 = n+{n\choose 2}$. Then
 $n^2 =\sum_{i=1}^{n +{n\choose 2}} | O_{(x_i ,y_i)}|$, for some $(x_i,y_i)\in X^2$.
One clearly has that $r'(x,y)=(\sigma'_x (y), x)\neq (x,y)$ if $x$
and $y$  are distinct elements in $X$. Hence, for such elements
$|O(x,y)|\geq 2$. Thus, if there are $m$ orbits with one element,
then $n\geq m$ and
 $$n^2 \geq  m + 2(n +{n\choose 2} -m) = n(n+1) -m=n^2+ n-m.$$
Hence, $n=m$ and thus $|O_{(x,x)}|=1$ for all $x\in X$, and all
other orbits have precisely $2$ elements. Therefore $r'$ is
involutive and each $\sigma_x =\id$, that is, $M(X,r')$ is the
free abelian monoid of rank $n$. So, the maximality condition
holds precisely when $r'$  is involutive and $M(X,r')$ is the free
abelian monoid of rank $|X|$. The following result  is a
consequence of Theorem~4.5 in \cite{JKV}.

 \begin{theorem}
 Let $(X,r)$ be a finite non-degenerate braided set and $K$ a field.
 The following properties are equivalent:
(1)  $A(K,X,r)$ satisfies the maximality condition, that is $\dim
(A(K,X,r)_2) = {|X| \choose 2} +|X|$;
 (2) $(X,r)$ is involutive;
 (3) $M(X,r) $ is a cancellative monoid;
 (4) $M(X,r')$ is free abelian;
 (5) $A(K,X,r)$ is a prime algebra;
 (6) $A(K,X,r)$ is a domain;
 (7) $\cl (A(K,X,r)) =|X|$;
 (8) $\GK ( A(K,X,r)) =|X|$.
 \end{theorem}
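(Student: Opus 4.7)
The plan is to exploit the bijective, degree-preserving $1$-cocycle $\phi\colon M(X,r)\to M(X,r')$ recalled just before the statement, together with the orbit-counting observation already sketched and \cite[Theorem~4.5]{JKV}.

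For the equivalence $(1)\Leftrightarrow(4)$ I would formalise the argument already outlined in the paragraph preceding the theorem. Since $\phi$ is bijective and length-preserving, $\dim A_2=|M'_2|$ coincides with the number of $r'$-orbits on $X^2$; for $x\neq y$ the $r'$-orbit of $(x,y)$ has at least two elements because the second coordinate of $r'(x,y)=(\sigma'_x(y),x)$ equals $x\neq y$. A counting comparison with $|X^2|=n^2$ forces, when $\dim A_2=n+\binom{n}{2}$, every diagonal orbit to be a singleton and every off-diagonal orbit to have exactly two elements. This is the same as saying $\sigma'_x=\id$ for every $x\in X$, i.e.\ that $M(X,r')$ is the free abelian monoid on $X$.

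For $(2)\Leftrightarrow(4)$ I would unfold the definition $\sigma'_x(y)=\gamma_x(\sigma_{\gamma_y^{-1}(x)}(y))$ and, after the substitution $x=\gamma_y(z)$, rewrite the condition $\sigma'_x=\id$ as $\gamma_{\gamma_y(z)}(\sigma_z(y))=y$. This is precisely one of the two identities equivalent to $r^2=\id$, and the companion identity then follows from non-degeneracy and the braid relation $r_1r_2r_1=r_2r_1r_2$. Hence $M(X,r')$ is free abelian iff $(X,r)$ is involutive.

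The remaining equivalences split into two halves. The implications $(2)\Rightarrow(3),(6),(7),(8)$ constitute the classical ESS/GIVdB description of involutive non-degenerate solutions: $M(X,r)$ embeds in $\Z^n\rtimes\Sym_X$ and $A(K,X,r)$ is a Noetherian domain with $\GK=\cl=n$; and $(6)\Rightarrow(5)$ is trivial. The converse implications $(3)\Rightarrow(2)$, $(5)\Rightarrow(2)$, $(7)\Rightarrow(2)$ and $(8)\Rightarrow(2)$ are supplied by \cite[Theorem~4.5]{JKV} applied to the present structure algebra, after checking that its hypotheses match the current setting of a finite non-degenerate braided set. The main obstacle I foresee is Step $(2)\Leftrightarrow(4)$: the formula for $r'$ mixes $\sigma$ and $\gamma$ nontrivially, so isolating exactly when $\sigma'_x=\id$ holds for every $x$ requires careful use of both non-degeneracy and the braid relation of $(X,r)$; once this is done, the rest is bookkeeping around $\phi$ and the cited theorem.
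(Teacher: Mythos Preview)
Your proposal is correct and follows essentially the same route as the paper. The paper's own argument is precisely the orbit-counting computation (your $(1)\Leftrightarrow(4)$ step) carried out in the paragraph preceding the theorem, after which the remaining equivalences are deferred wholesale to \cite[Theorem~4.5]{JKV}; you reproduce that counting argument and invoke the same reference, the only difference being that you additionally sketch a direct verification of $(2)\Leftrightarrow(4)$, which the paper leaves inside the JKV citation.
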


This result confirms that for a finite non-degenerate braided
set $(X,r)$ the maximality condition is determined by strong
properties on the set-theoretic solution $(X,r)$ and, equivalently, on
the algebra $A(K,X,r)$.

Note that the structure of $A(K,X,r)$ has been extensively
studied for finite quadratic sets $(X,r)$ provided $r$ is
involutive and for each $x\in X$ there exists a unique $y$ such that
$r(x,y)=(x,y)$  (see for example
\cite{CJO04,CJO06,CO,GI2012,GIJO,JObook,JVC}). In this case,
$\dim(A(K,X,r)_2)={n \choose 2} +n$, where $|X|=n$. If $(X,r)$ is
also non-degenerate, then the monoid $M(X,r)$ is called quadratic
monoid in \cite{JVC}. If, furthermore,  $(X,r)$ is square-free then
the monoid
$M(X,r)$ is called of skew type in \cite{JObook}.

In \cite{GI2018}, Gateva-Ivanova introduced a `minimality
condition' for a finite non-degenerate braided set $(X,r)$, with a
focus on square-free sets. In other words, she proposed to
consider the case where $\dim A_2$ is smallest possible. Several
interesting questions are posed concerning the lower bound on
$\dim A_2$ and its impact on the combinatorial properties of
$(X,r)$. By analogy with the maximality condition, one is tempted
to expect that this can also lead to nontrivial structural
properties of the solution $(X,r)$ and of its associated algebraic
structures (monoids, groups and algebras).  This viewpoint turned
out to be fruitful also in other contexts of the theory of
quadratic algebras, see Sections 6.4 and 6.5 in \cite{polpos}.

In this paper we answer several of the questions posed in
\cite{GI2018}. Our approach relies on a reduction to $\SD$ braided
sets, based on the recent result in \cite{JKV}, described above. The
main focus is on finding the exact lower bounds on $\dim(A_2)$ and
then on characterizing all sets $(X,r)$ for which these bounds are
realized.

We list the main results of the paper. First, we show in
Example~\ref{dim-An-one} that there exists a finite non-degenerate
quadratic set $(X,r)$ such that $\dim(A_m)=1$, for all $m>1$; so,
in particular, $\GK(A)=1$.

If, furthermore, $(X,r)$ is a braided set then we prove in
Theorem~\ref{GeneralMinDim} that $\dim (A_2)\geq \frac{|X|}{2}$.
Moreover,  we describe explicitly all cases when the lower bound
$\frac{|X|}{2}$ or $\frac{|X|+1}{2}$ (depending on whether $|X|$
is even or odd) is reached, in terms of the derived solution
$(X,r')$. In particular, this answers  Question 3.18(1) in
\cite{GI2018}. As a consequence, it follows in these cases that
the solution $(X,r)$ is indecomposable.

One of the main cases that arises in this context is when $(X,r')$
is the braided set associated to the so called dihedral
quandle. We discuss this case in detail in Section~\ref{quandles},
also in the context of the impact of the size of $A_2$ on the growth
function of the algebra $A$.

For square-free non-degenerate quadratic sets $(X,r)$ it is easy
to see that $|X|+1$ is the smallest possible value of $\dim(A_2)$.
In Example~\ref{dim-An-sf} we present a construction  with
$\dim(A_m)=|X|+1$ for all $m>1$; so that $\GK(A)=1$ also in this
case.

Let $(X,r)$ be a finite square-free non-degenerate braided set. In
Theorem~\ref{SquareFreeMinDim} we prove that $\dim (A_2) \geq 2
|X|-1$. Then the main result of Section~\ref{sf-braided},
Corollary~\ref{cor-minimal}, shows that this lower bound is
achieved if and only if the derived set $(X,r')$ is of one of the
following types:
\begin{enumerate}
\item $|X|$ is an odd prime and $(X,r)$ is the braided set associated to the dihedral
quandle,
\item $|X|=2$ and $(X,r)$ is the trivial braided set,  that is
$r(x,y)=(y,x)$ for all $x,y\in X$,
\item $X=\{ 1,2,3\}$ and $\sigma_1=\sigma_{2}=\id$, $\sigma_3=(1,2)$.
\end{enumerate}

In particular, this answers another problem of Gateva-Ivanova
(Question~3.18(2) in \cite{GI2018}). Note that the dihedral quandles
of cardinality an odd prime are simple quandles (see \cite{J2}).
Thus this also answers Open Questions~4.3.1 (1) and (2) in
\cite{GI2018}. On the other hand, the non-singleton orbits in $X^3$
under the action of the group $\langle r_1,r_2\rangle$ in the above
type (3) have not the same cardinality. In fact, $|O_{(1,1,3)}|=12$,
$|O_{(1,3,3)}|=6$ and $|O_{(1,1,2)}|=|O_{(1,2,2)}|=3$. Therefore
this answers in the negative Open Question 4.1.2 in \cite{GI2018}.

We conclude with a solution of Problem 4.3.2 of \cite{GI2018},
concerning the study of all finite $2$-cancellative, square-free
$\SD$ braided sets $(X,r)$, with $r(x,y)=(\sigma_x(y),x)$, such that
$\sigma_x^2=\id$, for all $x\in X$, that have a  given $r$-orbit in
$X^2$ of cardinality $|X|$; this is done in Proposition~\ref{sd}.

\section{Preliminaries}

In this section we recall some basic notions and we prove certain
auxiliary results that are useful when dealing with $r$-orbits in
$X^2$.
\begin{definition} A quadratic set is a pair $(X,r)$ of a nonempty set
$X$ and a bijective map $r\colon X\times X\longrightarrow X\times
X$. We will write $r(x,y)=(\sigma_x(y),\gamma_y(x))$ for all $x,y\in
X$.
\end{definition}

\begin{definition}
Let $(X,r)$ be a quadratic set. We say that $(X,r)$ is
non-degenerate if the maps $\sigma_x$ and $\gamma_x$ are bijective
for all $x\in X$. We say that $(X,r)$ is involutive if
$r^2=\id_{X^2}$. We say that $(X,r)$ is square-free if
$r(x,x)=(x,x)$, for all $x\in X$. We say that $(X,r)$ is a braided
set if
$$r_1r_2r_1=r_2r_1r_2,$$
where $r_1=r\times \id_X$ and $r_2=\id_X\times r$ are maps from
$X^3$ to itself.
\end{definition}

A braided set is a set-theoretic solution of the Yang-Baxter
equation (see \cite{ESS, GIVdB}).

\begin{definition}
Let $(X,r)$ be a quadratic set. The monoid associated with $(X,r)$
is the monoid $M(X,r)=\langle X\mid
xy=\sigma_x(y)\gamma_y(x)\rangle$. We say that $(X,r)$ is
$2$-cancellative if in $M(X,r)$
$$xy=xz \text{ or } yx=zx\text{ implies }y=z,$$
for all $x,y,z\in X$.
 Let $K$ be a field. The
$K$-algebra associated with $(X,r)$ is the monoid algebra
$A(K,X,r)=K[M(X,r)]$. For every nonnegative integer $i$, we define
$A_i=Span_K\{x_1\cdots x_i\mid x_1,\dots ,x_i\in X\}$ in
$A(K,X,r)$.
\end{definition}

The monoid $M(X,r)$ and the algebra $A(K,X,r)$ are also called the
structure monoid and the structure $K$-algebra of $(X,r)$.

\begin{remark}\label{monoid}
{\rm Note that if $(X,r)$ is a quadratic set then $M(X,r)$ is
naturally graded by the length. Since $r\in \Sym_{X^2}$, it is clear
that $xy=zt$, in $M(X,r)$, for $x,y,z,t\in X$, if and only if
$(x,y)$ and $(z,t)$ belong to the same $r$-orbit in $X^2$. Let
$r_{m,i}\colon X^m\longrightarrow X^m$ be the map
$r_{m,i}=\id^{i-1}\times r\times \id^{m-i-1}$, for $1\leq i<m$. Then
$x_1\cdots x_m=y_1\cdots y_m$ in $M(X,r)$, for $x_i,y_i\in X$, if
and only if $(x_1,\dots ,x_m)$ and $(y_1,\dots, y_m)$ belong to the
same orbit of $X^m$ under the action of the subgroup of $\Sym_{X^m}$
generated by $r_{m,1},\dots ,r_{m,m-1}$. We will denote the orbit of
$(x_1,\dots ,x_m)$ by $O_{(x_1,\dots ,x_m)}$.}
\end{remark}

The following result is well-known and its proof is
straightforward.

\begin{lemma}\label{conditions}
Let $(X,r)$ be a quadratic set. We write
$r(x,y)=(\sigma_x(y),\gamma_y(x))$ for all $x,y\in X$. Then  $(X,r)$
is a braided set if and only if the following conditions hold.
\begin{itemize}
\item[(i)]
$\sigma_{x}\sigma_{y}=\sigma_{\sigma_x(y)}\sigma_{\gamma_y(x)}$,
\item[(ii)]
$\gamma_{x}\gamma_{y}=\gamma_{\gamma_x(y)}\gamma_{\sigma_y(x)}$,
\item[(iii)]
$\gamma_{\sigma_{\gamma_{x}(y)}(z)}\sigma_{y}(x)=\sigma_{\gamma_{\sigma_{x}(z)}(y)}\gamma_{z}(x)$,
\end{itemize}
for all $x,y,z\in X$.
\end{lemma}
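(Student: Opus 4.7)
The plan is to verify the equivalence by applying both sides of the braid relation $r_1 r_2 r_1 = r_2 r_1 r_2$ to a generic triple $(x,y,z) \in X^3$, expanding each composition using $r(a,b) = (\sigma_a(b), \gamma_b(a))$, and comparing the resulting elements of $X^3$ coordinate by coordinate. Since (i), (ii), (iii) are required to hold for all $x,y,z$, this reduces the braid relation to three universally quantified scalar identities, one per coordinate.

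Unfolding $r_1 r_2 r_1$ step by step yields a triple whose first coordinate is $\sigma_{\sigma_x(y)}\sigma_{\gamma_y(x)}(z)$, whose third coordinate is $\gamma_z\gamma_y(x)$, and whose middle coordinate is a nested $\sigma$-inside-$\gamma$ expression. An analogous computation for $r_2 r_1 r_2$ yields first coordinate $\sigma_x\sigma_y(z)$, third coordinate $\gamma_{\gamma_z(y)}\gamma_{\sigma_y(z)}(x)$, and a $\gamma$-inside-$\sigma$ middle coordinate. Matching first and third coordinates gives, respectively, conditions (i) and (ii) (in the latter case after swapping the dummy variables $x \leftrightarrow z$); matching middle coordinates gives (iii) after a similar harmless relabelling. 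The converse direction is immediate from the same computation read in the opposite direction: if (i), (ii), (iii) hold pointwise, then the two expanded triples agree coordinate by coordinate.

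The only real obstacle is careful bookkeeping of the nested subscripts in the middle coordinate, where one has to track the composition order of $\sigma$ and $\gamma$ correctly and then rename the dummy variables $(x,y,z)$ to bring the resulting identity into precisely the form printed in (iii). No conceptual difficulty arises; the argument is essentially mechanical, which is why the lemma is stated as well-known with a straightforward proof.
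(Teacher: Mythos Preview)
Your proposal is correct and is exactly the standard direct computation the paper has in mind; the paper itself omits the proof entirely, merely remarking that the result is well-known and its proof straightforward. For the record, the relabelling needed for the middle coordinate is $(x,y,z)\mapsto(y,x,z)$ (swap $x$ and $y$), which turns the computed identity $\gamma_{\sigma_{\gamma_y(x)}(z)}\sigma_x(y)=\sigma_{\gamma_{\sigma_y(z)}(x)}\gamma_z(y)$ into the stated form of~(iii).
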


The next technical result is very useful to study the $r$-orbits in
$X^2$ for non-degenerate braided sets $(X,r)$.

\begin{lemma} \label{inductive}
Let $(X,r) $ be a non-degenerate braided set such that $r(x,y) =
(\sigma_{x}(y), x)$ for all $x,y \in X$. Then
\begin{enumerate}
\item
$\sigma_{(\sigma_{x}\sigma_{y})^{k}\sigma_{x}^{-k}(x)}=(\sigma_{x}\sigma_{y})^{k}
\sigma_{x}(\sigma_{x}\sigma_{y})^{-k}$,
\item $\sigma_{(\sigma_{x}\sigma_{y})^{k}\sigma_x\sigma_{y}^{-k}(y)}=(\sigma_{x}\sigma_{y})^{k}
\sigma_{x}\sigma_{y}\sigma_{x}^{-1}(\sigma_{x}\sigma_{y})^{-k}$,
\item $r^{2k}(x,y) = ((\sigma_{x}\sigma_{y})^{k}\sigma_{x}^{-k}(x),
(\sigma_{x}\sigma_{y})^{k-1}\sigma_{x}\sigma_{y}^{-k+1}(y) )$,
\item $r^{2k+1}(x,y) = ( (\sigma_{x}\sigma_{y})^{k}\sigma_x \sigma_{y}^{-k}(y)),
(\sigma_{x}\sigma_{y})^{k}\sigma_{x}^{-k}(x))$
\end{enumerate}
for all non-negative integers $k$.
\end{lemma}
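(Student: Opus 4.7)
The plan rests on the observation that, under the assumption $r(x,y)=(\sigma_x(y),x)$, we have $\gamma_y=\id$, so Lemma~\ref{conditions}(i) reduces to the rack identity $\sigma_x\sigma_y=\sigma_{\sigma_x(y)}\sigma_x$, equivalently
\[\sigma_{\sigma_a(b)}=\sigma_a\sigma_b\sigma_a^{-1}\quad\text{for all }a,b\in X.\]
Conditions (ii) and (iii) of Lemma~\ref{conditions} hold trivially in this setting, so this is the only braid content we will need.

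The first step is to establish the following propagation statement: for every element $T$ in the subgroup of $\Sym_X$ generated by $\{\sigma_z:z\in X\}$ and every $b\in X$,
\[\sigma_{T(b)}=T\,\sigma_b\,T^{-1}.\]
This follows by a short induction on the length of $T$ as a word in the $\sigma_z^{\pm 1}$. The base case $T=\sigma_a$ is exactly the rack identity. For $T=\sigma_a^{-1}$, set $c=\sigma_a^{-1}(b)$ in the rack identity to obtain $\sigma_b=\sigma_a\sigma_c\sigma_a^{-1}$, i.e.\ $\sigma_c=\sigma_a^{-1}\sigma_b\sigma_a$. Compositions are then handled in a single line: if $\sigma_{T(b)}=T\sigma_bT^{-1}$ and $S\in\{\sigma_z^{\pm 1}\}$, then $\sigma_{ST(b)}=S\sigma_{T(b)}S^{-1}=(ST)\sigma_b(ST)^{-1}$. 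With this in hand, (1) is just the case $T=(\sigma_x\sigma_y)^k\sigma_x^{-k}$ applied to $b=x$, because a direct computation gives $T\sigma_xT^{-1}=(\sigma_x\sigma_y)^k\sigma_x(\sigma_x\sigma_y)^{-k}$; and (2) is the case $S=(\sigma_x\sigma_y)^k\sigma_x\sigma_y^{-k}$ applied to $b=y$, since $S\sigma_yS^{-1}=(\sigma_x\sigma_y)^k\sigma_x\sigma_y\sigma_x^{-1}(\sigma_x\sigma_y)^{-k}$.

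For (3) and (4) I would proceed by simultaneous induction on $k$. The base cases $k=0$ give $r^0(x,y)=(x,y)$ (matching the formula in (3) after telescoping the exponents in $(\sigma_x\sigma_y)^{-1}\sigma_x\sigma_y$) and $r(x,y)=(\sigma_x(y),x)$ (matching (4)). For the inductive step, $r^{2k+1}(x,y)=r(r^{2k}(x,y))$: writing the first coordinate of $r^{2k}(x,y)$ as $u=(\sigma_x\sigma_y)^k\sigma_x^{-k}(x)$ and using (1), $\sigma_u=(\sigma_x\sigma_y)^k\sigma_x(\sigma_x\sigma_y)^{-k}$; applied to the second coordinate $v=(\sigma_x\sigma_y)^{k-1}\sigma_x\sigma_y^{-k+1}(y)$ this collapses to $(\sigma_x\sigma_y)^k\sigma_x\sigma_y^{-k}(y)$, which is exactly the first coordinate demanded in (4). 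Then $r^{2k+2}(x,y)=r(r^{2k+1}(x,y))$: writing the first coordinate of $r^{2k+1}(x,y)$ as $u'=(\sigma_x\sigma_y)^k\sigma_x\sigma_y^{-k}(y)$ and using (2), $\sigma_{u'}=(\sigma_x\sigma_y)^k\sigma_x\sigma_y\sigma_x^{-1}(\sigma_x\sigma_y)^{-k}$; applying this to the second coordinate $v'=(\sigma_x\sigma_y)^k\sigma_x^{-k}(x)$ telescopes to $(\sigma_x\sigma_y)^{k+1}\sigma_x^{-k-1}(x)$, matching the $k+1$ case of (3).

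The only thing that requires any care is the routine but slightly finicky bookkeeping that makes the telescoping work out, in particular tracking the positive and negative exponents of $\sigma_x$, $\sigma_y$, and $\sigma_x\sigma_y$ in the correct order. Once the propagation lemma above is isolated, (1) and (2) require no further braided-set calculation at all, and (3), (4) reduce to this exponent manipulation.
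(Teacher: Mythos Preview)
Your proposal is correct, and it follows a different, somewhat cleaner route than the paper's.

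The paper proves all four assertions by a single simultaneous induction on $k$: at each inductive step it first derives (3) for $k+1$ from (4) and (2) at level $k$, then invokes Lemma~\ref{conditions}(i) directly to push (1) from $k$ to $k+1$, then derives (4) for $k+1$, and finally uses Lemma~\ref{conditions}(i) again to obtain (2) at level $k+1$. In other words, the conjugation identities (1) and (2) are established one step at a time as part of the same induction that handles the orbit formulas.

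Your approach decouples these two layers. You first isolate the general rack-theoretic fact $\sigma_{T(b)}=T\sigma_bT^{-1}$ for every $T$ in the group generated by the $\sigma_z$, proving it once by a short induction on word length. This immediately yields (1) and (2) for all $k$ simultaneously, with no further braid computation required. The induction on $k$ is then confined to (3) and (4), where it becomes pure exponent bookkeeping. Conceptually, your propagation lemma is the statement that the map $x\mapsto\sigma_x$ is $\mathcal{G}(X,r)$-equivariant for the natural action on $X$ and the conjugation action on itself; recognising this makes the argument more transparent, at the cost of introducing an auxiliary lemma. The paper's approach avoids that auxiliary statement but pays for it with a four-strand interleaved induction.
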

\begin{proof}
We shall prove the result by induction on $k$. For $k=0$,  part (1)
obviously holds and  part (2) follows from
Lemma~\ref{conditions}(i). Also part (3) and part (4) are obvious as
$(x,y)=(x,(\sigma_x \sigma_y)^{-1} \sigma_x \sigma_y (y)) =(x,y)$
and $r(x,y)=(\sigma_x (y),x)$. Assume now that the result holds for
$k$. Then
\begin{eqnarray*}
r^{2(k+1)}(x,y) &=& r(r^{2k+1})(x,y)\\
 &=&(\sigma_{(\sigma_{x}\sigma_{y})^{k}\sigma_x \sigma_{y}^{-k}(y)} (\sigma_x \sigma_y)^{k} \sigma_x^{-k}(x), (\sigma_x \sigma_y )^{k} \sigma_x \sigma_y^{-k}(y))\\
&=& ((\sigma_x \sigma_y)^{k} \sigma_x \sigma_y \sigma_x^{-1} (\sigma_x \sigma_y)^{-k}  (\sigma_x \sigma_y )^{k} \sigma_x^{-k}(x), (\sigma_x \sigma_y )^{k} \sigma_x \sigma_y^{-k}(y))\\
 &=& ((\sigma_x \sigma_y )^{k+1} \sigma_x^{-(k+1)}(x), (\sigma_x \sigma_y )^{k} \sigma_x \sigma_y^{-k}(y))
\end{eqnarray*}
Hence part (3) follows for $k+1$. Furthermore, because of
Lemma~\ref{conditions}.(i),
 $$\sigma_{(\sigma_x \sigma_y)^{k} \sigma_x \sigma_y^{-k}(y)}
  \sigma_{(\sigma_x \sigma_y)^{k}\sigma_x^{-k}(x)} =
  \sigma_{(\sigma_x \sigma_y)^{k+1}\sigma_x^{-(k+1)}(x)}
  \sigma_{(\sigma_x \sigma_y )^{k} \sigma_x \sigma_y^{-k}(y)}.
  $$
Hence,
\begin{eqnarray*}
\sigma_{(\sigma_x \sigma_y)^{k+1}\sigma_x^{-(k+1)}(x)} &=&
\sigma_{(\sigma_x \sigma_y)^{k} \sigma_x \sigma_y^{-k}(y)}
  \sigma_{(\sigma_x \sigma_y)^{k}\sigma_x^{-k}(x)}
   \sigma^{-1}_{(\sigma_x \sigma_y )^{k} \sigma_x \sigma_y^{-k}(y)}\\
   &=& (\sigma_{x} \sigma_y )^{k} \sigma_x \sigma_y \sigma_x^{-1} (\sigma_x \sigma_y )^{-k} \\
   && (\sigma_x \sigma_y)^{k} \sigma_x (\sigma_x \sigma_y )^{-k} (\sigma_x \sigma_y)^{k} \sigma_x \sigma_y^{-1} \sigma_x^{-1} (\sigma_x \sigma_y)^{-k}\\
   &=& (\sigma_x \sigma_y)^{k+1} \sigma_x (\sigma_x \sigma_y )^{-(k+1)}.
\end{eqnarray*}
Thus (1) follows for $k+1$. Further,
\begin{eqnarray*}
r^{2(k+1)+1}(x,y) &=& r(r^{2(k+1)}(x,y))\\
&=& (\sigma_{(\sigma_x \sigma_y)^{k+1} \sigma_x^{-(k+1)}(x)}
(\sigma_x \sigma_y)^{k} \sigma_x \sigma_y^{-k}(y),
(\sigma_x \sigma_y)^{k+1} \sigma_x^{-(k+1)}(y))\\
&=& ((\sigma_x \sigma_y)^{k+1} \sigma_x (\sigma_x \sigma_y)^{-(k+1)} (\sigma_x \sigma_y)^{k} \sigma_x \sigma_{y}^{-k}(y),
 (\sigma_x \sigma_y)^{k+1} \sigma_x^{-(k+1)}(x))\\
&=& ((\sigma_x \sigma_y)^{k+1} \sigma_x \sigma_y^{-(k+1)}(y),
  (\sigma_x \sigma_y)^{k+1}\sigma_{x}^{-(k+1)}(x)),
\end{eqnarray*}
and thus part (4) is true for $k+1$. Furthermore, because of
Lemma~\ref{conditions}.(i),
$$\sigma_{(\sigma_x \sigma_y)^{k+1} \sigma_x^{-(k+1)}(x)}
   \sigma_{(\sigma_x \sigma_y )^{k} \sigma_x \sigma_y^{-k}(y)}
=
   \sigma_{(\sigma_x \sigma_y)^{k+1} \sigma_x \sigma_y^{-(k+1)}(y)} \sigma_{(\sigma_x \sigma_y)^{k+1} \sigma_x^{-(k+1)} (x)}.
$$
Hence,
\begin{eqnarray*}
\lefteqn{\sigma_{(\sigma_x \sigma_y)^{k+1}
\sigma_x \sigma_{y}^{-(k+1)}(y)}}\\
&=& (\sigma_x \sigma_y)^{k+1} \sigma_x (\sigma_x \sigma_y)^{-(k+1)}
(\sigma_x \sigma_y )^{k} \sigma_x \sigma_y \sigma_x^{-1} (\sigma_x
\sigma_y)^{-k}
(\sigma_x \sigma_y)^{k+1} \sigma_x^{-1} (\sigma_x \sigma_y)^{-(k+1)}\\
&=&
(\sigma_x \sigma_y)^{k+1} \sigma_x \sigma_y \sigma_x^{-1}
(\sigma_x \sigma_y)^{-(k+1)}.
\end{eqnarray*}
Hence part (2) follows for $k+1$. Therefore, the result follows by
induction.
\end{proof}

We will also need the following direct consequence of
Lemma~\ref{inductive}.

\begin{corollary}\label{rn}
Let $(X,r)$ be a square-free non-degenerate braided set such that
$r(x,y)=(\sigma_x(y),x)$ for all $x,y\in X$. Then
\begin{eqnarray*}
r^{2k+1}(x,y)&=&((\sigma_x\sigma_y)^{k}\sigma_x(y),(\sigma_x\sigma_y)^{k}(x)),\\
r^{2k}(x,y)&=&((\sigma_x\sigma_y)^{k}
(x),(\sigma_x\sigma_y)^{k-1}\sigma_x(y)),
\end{eqnarray*}
for all non-negative integers $k$.
\end{corollary}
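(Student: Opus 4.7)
The plan is to derive the corollary as a direct substitution into parts (3) and (4) of Lemma~\ref{inductive}, exploiting the square-free hypothesis.

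First I would observe that square-freeness $r(x,x)=(x,x)$ combined with $r(x,y)=(\sigma_x(y),x)$ forces $\sigma_x(x)=x$ for every $x\in X$. Since $\sigma_x$ is a bijection, this fixed-point property is preserved by $\sigma_x^{-1}$, so by a trivial induction $\sigma_x^{-k}(x)=x$ for every non-negative integer $k$, and likewise $\sigma_y^{-k}(y)=y$.

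Next I would simply plug these identities into Lemma~\ref{inductive}(3) and (4). In (3) the first coordinate $(\sigma_x\sigma_y)^{k}\sigma_x^{-k}(x)$ collapses to $(\sigma_x\sigma_y)^{k}(x)$, while the second coordinate $(\sigma_x\sigma_y)^{k-1}\sigma_x\sigma_y^{-k+1}(y)$ collapses to $(\sigma_x\sigma_y)^{k-1}\sigma_x(y)$ (using $\sigma_y^{-(k-1)}(y)=y$); this gives precisely the claimed formula for $r^{2k}(x,y)$. In (4) the first coordinate $(\sigma_x\sigma_y)^{k}\sigma_x\sigma_y^{-k}(y)$ collapses to $(\sigma_x\sigma_y)^{k}\sigma_x(y)$, and the second coordinate already matches after the same simplification $\sigma_x^{-k}(x)=x$, yielding the claimed formula for $r^{2k+1}(x,y)$.

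There is no real obstacle here: the only ingredient beyond Lemma~\ref{inductive} is the square-free identity $\sigma_x(x)=x$, and the argument is a one-line substitution in each of the two formulas. One could alternatively redo the induction on $k$ from scratch using only Lemma~\ref{conditions}(i), but invoking Lemma~\ref{inductive} is cleaner and avoids repeating essentially the same computation.
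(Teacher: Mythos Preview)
Your proposal is correct and matches the paper's approach exactly: the paper simply states that Corollary~\ref{rn} is a direct consequence of Lemma~\ref{inductive}, and the substitution you describe---using the square-free identity $\sigma_x(x)=x$ (hence $\sigma_x^{-k}(x)=x$ and $\sigma_y^{-k}(y)=y$) in parts (3) and (4)---is precisely the intended derivation.
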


\section{Non-degenerate braided sets} \label{sect-non-deg}
In this section we give a lower bound for the dimension of
$A(K,X,r)_2$ for finite non-degenerate braided sets $(X,r)$ and we
describe when this lower bound is achieved.

Let $(X,r)$ be a finite quadratic set. It is clear that if $r\in
\Sym_{X^2}$ is any cycle of length $|X|^2$, then the number  of
$r$-orbits in $X^2$ is $1$.

\begin{example} \label{dim-An-one}
Let $K$ be a field. Let $n>1$ be an integer. Then there exists a
non-degenerate quadratic set $(X,r)$ such that $|X|=n$  and
$\dim(A_m)=1$, for all $m>1$. In particular, its associated
$K$-algebra $A=A(K,X,r)$ has $GK$-dimension $1$.
\end{example}

\begin{proof}
Let $X=\{ 1,2,\dots ,n\}$. Let $r\colon X\times X\longrightarrow
X\times X$  be the map defined by $r(x,y)=(\sigma_x(y), x)$, where
\begin{align*}&\sigma_1=\sigma_n=(1,2,\dots ,n)
\mbox{ and } \sigma_{x}=(x,x+1,\dots, n)
\end{align*}
for all $1<x<n$. Note that
\begin{eqnarray*}
&&(1,1)\overset{r}{\mapsto}(2,1)\overset{r}{\mapsto}
(1,2)\overset{r}{\mapsto} \cdots\overset{r}{\mapsto}
(1,n-1)\overset{r}{\mapsto} (n,1)\overset{r}{\mapsto}
(2,n)\\
&&\overset{r}{\mapsto} (2,2)\overset{r}{\mapsto}
(3,2)\overset{r}{\mapsto} (2,3)\overset{r}{\mapsto} \cdots
\overset{r}{\mapsto}(2,n-1)\overset{r}{\mapsto}
(n,2)\overset{r}{\mapsto} (3,n)\\
&&\overset{r}{\mapsto} (3,3)
\overset{r}{\mapsto} \cdots\\
&&\overset{r}{\mapsto}(n-1,n-1)\overset{r}{\mapsto}(n,n-1)\overset{r}{\mapsto}
(n,n)\overset{r}{\mapsto} (1,n) \overset{r}{\mapsto} (1,1).
\end{eqnarray*}
Hence $O_{(1,1)}=X^2$ and thus $\dim (A_2)=1$. It is easy to see
by induction on $m\geq 2$ that $\dim (A_m)=1$. Hence $\dim_K
(\oplus_{i=0}^mA_i)=n+m$, for every positive integer $m>1$.
Therefore $\GK(A)=1$ and this finishes the proof.
\end{proof}

\begin{remark} {\rm
In \cite{CJO06} (see also  \cite[Chapter~10]{JObook}) it is shown
that, for every integer $n>1$ and for every integer $2\leq i\leq n$,
there are examples of non-degenerate monoids of skew type $M(X,r)$,
with $|X|=n$ and such that $\GK(A(K,X,r))=i$, for any field $K$.
Furthermore, for every positive integer $k$,  there are
non-degenerate monoids of skew type $M(X,r)$ such that
$\GK(A(K,X,r))=1$ and $|X|=4^k$, for any field $K$ (see
\cite{CJO04} or \cite[Chapter~10]{JObook}). It is an open problem
whether, for every integer $n>4$, there exist non-degenerate monoids
of skew type $M(X,r)$ such that $|X|=n$ and $\GK(A(K,X,r))=1$, for
any field $K$. }
\end{remark}

Following \cite{GI2018}, we say that a quadratic set $(X,r)$ is
$\SD$ if $r(x,y)=(\sigma_x(y),x)$, for some maps $\sigma_x\colon
X\longrightarrow X$.

Note that if $(X,r)$ is an $\SD$ quadratic set, then, since $r$ is
bijective, for every $x,z\in X$ there exists a unique $y\in X$ such
that $r(x,y)=(\sigma_x(y),x)=(z,x)$. Hence the maps $\sigma_x$ are
bijective, for all $x\in X$, and thus $(X,r)$ is non-degenerate.

In order to state our next result it is convenient to recall the
notion of multipermutation solution. Initially this has been studied
for non-degenerate involutive braided sets \cite{ESS} and later it
has been studied by Lebed and Vendramin \cite{LebedVendramin} for
arbitrary non-degenerate braided sets. Let $(X,r)$ be a
non-degenerate braided set. Then the relation $\sim$, defined  on
$X$ by $x\sim y$ if and only if $\sigma_x =\sigma_y$ and $\gamma_x
=\gamma_y$, is an equivalence relation and it induces a
non-degenerate braided set $(X/\sim, \overline{r})$, called the
retraction and denoted by $\mbox{Ret}(X, r)$, where
$\overline{r}([x],[y])=([\sigma_x(y)],[\gamma_y(x)])$. A
non-degenerate braided set is called multipermutation  of level $n$
if $n$ is the minimal non-negative integer such that the underlying
set of $\mbox{Ret}^{n}(X, r)$ has cardinality one.  A non-degenerate
braided set is called retractable if $|X/\sim |<|X|$, and
irretractable otherwise.  By the definition of the retraction, it is
clear that if $(X,r)$ is $\SD$ then so is $\mbox{Ret}(X, r)$.

\begin{theorem}\label{GeneralMinDim}
Let $K$ be a field. Let $(X,r)$ be a finite non-degenerate $\SD$
braided set and let $A=A(K,X,r)$ be its structure $K$-algebra. Then,
$\dim (A_2)\geq \frac{|X|}{2}$.

Furthermore, if $|X|$ is even then the lower bound  $\frac{|X|}{2}$
is reached precisely when all $\sigma_{x}$, with $x\in X$, are equal
to a cycle $\sigma$ of length $|X|$. If $|X|$ is odd then the lower
bound $\frac{|X|+1}{2}$ is reached when  all $\sigma_{x}$, with
$x\in X$, are equal to a cycle $\sigma$ of length $|X|$. In
particular, $r(a,b) =(\sigma (b),a)$ for all $a,b\in X$ and thus the
solution $(X,r)$ is an indecomposable multipermutation solution of
level $1$.
\end{theorem}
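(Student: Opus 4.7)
By Remark~\ref{monoid}, $\dim A_2$ equals the number of $r$-orbits in $X^2$, so I am counting orbits. The opening move is a reduction to the \emph{connected} case where $\Inn(X) = \langle \sigma_x : x \in X\rangle \leq \Sym_X$ acts transitively on $X$. If instead $X = X_1 \sqcup X_2$ with each $X_i$ nonempty and invariant under every $\sigma_x$, then $r(a,b) = (\sigma_a(b), a)$ maps $X_i \times X_j$ to $X_j \times X_i$, so $r$-orbits lie either inside some $X_i^2$ (and are $r$-orbits of the smaller SD braided set $(X_i, r|_{X_i^2})$) or inside the cross-piece $(X_i \times X_j) \cup (X_j \times X_i)$ with $i \neq j$. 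Induction on $|X|$ then gives
\[
\dim A_2 \;\geq\; \sum_i \frac{|X_i|}{2} + \#\{\text{cross-orbits}\} \;\geq\; \frac{|X|}{2},
\]
and every nonempty cross-piece contributes at least one extra orbit, so in the even case equality forces $X$ to be connected.

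In the connected case, Lemma~\ref{conditions}(i) gives $\sigma_{\sigma_x(y)} = \sigma_x \sigma_y \sigma_x^{-1}$, and combined with transitivity of $\Inn(X)$ this places all the $\sigma_x$ in a single $\Sym_X$-conjugacy class, in particular sharing the same cycle type. To upgrade ``conjugate'' to ``equal'' in the equality case I would exploit the retraction $\pi \colon (X,r) \to \Ret(X,r) = (X/{\sim}, \bar r)$ with $x \sim y \Leftrightarrow \sigma_x = \sigma_y$. Since $\pi \times \pi$ is a morphism of SD braided sets, it induces a surjection from $r$-orbits in $X^2$ onto $\bar r$-orbits in $(X/{\sim})^2$; a careful count of how each $\pi^{-1}$-preimage decomposes into $r$-orbits should show that a nontrivial retraction already yields strictly more than $|X|/2$ orbits. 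Hence at equality all $\sigma_x$ coincide with some common $\sigma \in \Sym_X$.

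Once $\sigma_x = \sigma$ for all $x$, Lemma~\ref{inductive}(3) simplifies to $r^{2k}(x,y) = (\sigma^k(x), \sigma^k(y))$; the $r^2$-orbit of $(x,y)$ has size $\mathrm{lcm}(c_x,c_y)$, where $c_x, c_y$ are the $\sigma$-cycle lengths through $x$ and $y$, and the $r$-orbit is either twice this or equal to it, the exceptional ``equal'' case happening when $c_x = c_y = c$ is odd and $x = \sigma^{(c+1)/2}(y)$, producing one orbit of size $c$ per odd cycle of $\sigma$. Organising the count by the cycle lengths $c_1, \ldots, c_k$ of $\sigma$ yields
\[
\dim A_2 \;=\; \sum_i \lceil c_i/2\rceil \;+\; \sum_{i<j}\gcd(c_i,c_j),
\]
which is minimized precisely when $k=1$ and $c_1 = |X|$, i.e. $\sigma$ is a single cycle of length $|X|$. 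The minimum is $|X|/2$ when $|X|$ is even, and $(|X|+1)/2$ when $|X|$ is odd; in the latter situation the single odd-size orbit, of size $|X|$, is the set of $(x,y)$ with $x = \sigma^{(|X|+1)/2}(y)$. The multipermutation-level and indecomposability assertions are then immediate: with $\sigma_x = \sigma$ for all $x$ the retraction $\Ret(X,r)$ collapses to a one-element set, giving multipermutation level $1$, and transitivity of the single $|X|$-cycle $\sigma$ precludes any invariant decomposition.

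The hardest step in this plan is the ``conjugate $\Rightarrow$ equal'' upgrade in the connected equality case: one must show that whenever $\Ret(X,r)$ has more than one equivalence class the orbit count in $X^2$ strictly exceeds $|X|/2$, which requires a careful analysis of how $\pi\times\pi$-preimages of $\bar r$-orbits split into $r$-orbits. Once that pinch-point is handled, the remaining computations are routine applications of Lemma~\ref{inductive}.
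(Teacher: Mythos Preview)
Your plan leaves a genuine gap precisely at the step you flag as hardest. As written, the inequality $\dim A_2 \geq |X|/2$ is never established in the connected case with nontrivial retraction: you propose to show strict inequality there via a ``careful analysis of how $\pi\times\pi$-preimages of $\bar r$-orbits split,'' but give no argument. Since your disconnected induction ultimately bottoms out in connected pieces, the whole inequality remains unproven.

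The missing observation---and this is how the paper proceeds---is that each retraction class $X_\sigma = \{x \in X : \sigma_x = \sigma\}$ already satisfies $r(X_\sigma^2)\subseteq X_\sigma^2$: for $a,b\in X_\sigma$ one has $\sigma_{\sigma_a(b)} = \sigma_a\sigma_b\sigma_a^{-1} = \sigma$, so $\sigma_a(b)\in X_\sigma$. Hence $(X_\sigma, r|_{X_\sigma^2})$ is itself an $\SD$ braided set in which \emph{all} the maps equal $\sigma|_{X_\sigma}$, and your own step-4 cycle computation applies to it directly, giving at least $\lceil |X_\sigma|/2\rceil$ orbits inside $X_\sigma^2$. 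Summing over $\sigma\in\mathcal{S}$ already yields $\dim A_2 \geq \sum_\sigma \lceil |X_\sigma|/2\rceil \geq |X|/2$; and whenever $|\mathcal{S}|>1$ any pair $(a,b)$ with $\sigma_a\neq\sigma_b$ lies in an $r$-orbit disjoint from every $X_\sigma^2$, forcing strict inequality. This both fills your gap and renders the connectedness reduction and the conjugacy discussion unnecessary: the partition of $X$ into the classes $X_\sigma$ is the only decomposition one needs, and one lands immediately in the ``all $\sigma_x$ equal'' situation on each piece.
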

\begin{proof}
Write $r(a,b)=(\sigma_{a}(b),a)$, for $a,b\in X$. Let $\mathcal{S}=
\{ \sigma_{a}
  \mid a \in X\} $.
For $\sigma \in \mathcal{S}$ put $X_{\sigma}=\{ a\in X \mid
\sigma_{a}=\sigma\}$. Clearly $X=\bigcup_{\sigma\in \mathcal{S}}
X_{\sigma}$, a disjoint union. By Lemma~\ref{conditions}, for
$a,b\in X$, we have $\sigma_a \sigma_b =\sigma_{\sigma_{a}(b)}
\sigma_a $. Let $\sigma \in \mathcal{S}$ and assume $a,b\in
X_{\sigma}$. Then, $\sigma_{\sigma_{a}(b)} =\sigma_a \sigma_b
\sigma_a^{-1} =\sigma^{2}\sigma^{-1}=\sigma$. Thus
$\sigma_{\sigma_{a}(b)} \in X_{\sigma}$. Therefore, if $r_{\sigma}$
denotes the restriction of $r$ to $X_{\sigma}^{2}$ then also
$(X_{\sigma},r_{\sigma})$  is a $\SD$ braided set. Write $\sigma =
c_1 c_2 \cdots c_s$ as a product of pairwise disjoint nontrivial
cycles in $\Sym_{X}$. For $1\leq j \leq s$, put
 $$X_{\sigma , j}=\{ a\in X_{\sigma} \mid c_{j}(a)\neq a \}
\; \mbox{ and }\;
  X_{\sigma , 0} \{ a\in X_{\sigma} \mid \sigma (a)=a \}.$$
We have that $X_{\sigma , j} \neq \emptyset$ for all $1\leq j \leq
s$. Note that, for $1\leq j \leq s$, the restriction $r_j$ of $r$ to
$X_{\sigma , j}^{2}$ satisfies
 $$r_j (a,b) =(c_j(b),a)$$
for all $a,b\in X_{\sigma , j}$ and $r(c,c) =(\sigma (c),c) =(c,c)$
for all $c\in X_{\sigma, 0}$. Hence, for all $c\in X_{\sigma, 0}$ we
have that $\{ (c,c)\}=O_{(c,c)}$, the $r$-orbit of $(c,c)$, and
$(X_{\sigma , j},r_j)$ is a $\SD$ braided set for all $1\leq j \leq
s$.

We claim that, under the action of $\langle r_j\rangle$, the number
of orbits in $X^2_{\sigma , j}$ is $\frac{|X_{\sigma , j}|}{2}$ if
$|X_{\sigma , j}|$ is even and otherwise it is
$\frac{|X_{\sigma ,j}|+1}{2}$. In order to prove the claim we may
assume that $X_{\sigma , j}=\Z/(m)$ and $r_j (a,b)=(b+1,a)$ for all
$a,b\in \Z/(m)$. We have
$$(0,1) \stackrel{r_j}{\mapsto} (2,0) \stackrel{r_j}{\mapsto} (1,2)
\stackrel{r_j}{\mapsto} (3,1) \stackrel{r_j}{\mapsto} \cdots
\stackrel{r_j}{\mapsto}  (m-1,0) \stackrel{r_j}{\mapsto} (1,m-1)
\stackrel{r_j}{\mapsto} (0,1).$$ By Lemma~\ref{inductive}
  $$r_{j}^{2k}(a,b) = (c_{j}^{k}(a),c_j^{k}(b))=(a+k,b+k)$$
  and
  $$r_{j}^{2k+1}(a,b) = (c_{j}^{k+1}(b),c_{j}^{k}(a)) =(b+k+1,a+k)$$
for all positive integers $k$. Let $k_0$ be the smallest positive
integer such that $r_j^{2k_0} (a,b)=(a,b)$ or $r_j^{2k_0
+1}(a,b)=(a,b)$. If $r_j^{2k_0} (a,b)=(a,b)$ then $a+k_0 =a$ and
$b+k_0=b$. Hence, $k_0=m$ in this case. If $r_j^{2k_0
+1}(a,b)=(a,b)$ then $b+k_0+1=a$ and $a+k_0=b$. Thus
$a-b=k_0+1=b-a+1$ in $\Z/(m)$. Hence $2(a-b)=1$ and $k_0=b-a$ in
$\Z/(m)$. So, in this case, $m$ is odd, $k_{0}=\frac{m-1}{2}$ and
$(a,b)=(a,a+k_0) \stackrel{r}{\mapsto} (a+k_0+1,a) =(a+k_0+1,a+m)
=(a+k_0+1,a+k_0+1+k_0)$. It follows that
$$  O_{(a,b)} =\{ (0,k_0) , (1,k_0+1), \ldots ,  (m-1,m-1+k_0)\}$$
and
  $$|O_{(a,a+\frac{m-1}{2})}|=|O_{(a,b)}|=|O_{(a,a+k_0)}|=m.$$

Hence, if $m$ is even then all $r_{j}$-orbits in $X_{\sigma,j}^2$
have cardinality $2m$ and thus there are $\frac{m}{2}$ orbits in
this case. On the other hand, if $m$ is odd then all the orbits have
cardinality $2m$, except
$$O_{(a,\frac{m-1}{2}+a)} = \{ (b,b+\frac{m-1}{2})\mid b\in \Z/{m}\}$$
that has cardinality $m$. Thus, if $m$ is odd then there are
$\frac{m+1}{2}$ orbits, and the claim follows.

Hence, the number of $r$-orbits in $X^2_{\sigma}$ is greater than or
equal to
    $$ \sum_{1\leq j \leq s,\; |X_{\sigma ,j}| \mbox{ even}}
          \frac{|X_{\sigma , j}|}{2}
 + \sum_{1\leq j \leq s, \; |X_{\sigma ,j}| \mbox{ odd}}
 \frac{|X_{\sigma, j}| +1}{2}
 + \; |X_{\sigma, 0}| \geq \frac{|X_{\sigma}|}{2}.$$
Furthermore, if $|X_{\sigma}|$ is even then $\frac{|X_{\sigma}|}{2}
$ is a lower bound, otherwise $\frac{|X_{\sigma}|+1}{2} $ is a lower
bound. The equality only holds if either $|X_{\sigma}|$ is even and
$\sigma$ is a cycle of  length $|X_{\sigma}|$, or $|X_{\sigma}|$ is
odd and $\sigma$ is a cycle of length $|X_{\sigma}|$.

The number of $r$-orbits in $X^2$ is greater than or equal to
 $$ \sum_{\sigma \in \mathcal{S},\; |X_{\sigma}| \mbox{ even}}   \frac{|X_{\sigma}|}{2}+ \sum_{\sigma \in \mathcal{S},\; |X_{\sigma}| \mbox{ odd}}
   \frac{|X_{\sigma}|+1}{2}\geq \left\{
      \begin{array}{ll}
     \frac{|X|}{2} & \mbox{ if } |X| \mbox{ is even} \vspace{5pt}\\
     \vspace{5pt}
     \frac{|X|+1}{2} & \mbox{ if } |X| \mbox{ is odd}
    \end{array}  \right.
  $$
and the equality holds precisely when $|\mathcal{S}|$$= 1$, $\sigma
\in \mathcal{S}$ is a cycle of length $|X|$.
\end{proof}

A concrete example that satisfies the lower bounds in
Theorem~\ref{GeneralMinDim} is
$$r: \Z/(n) \rightarrow \Z/(n): (a,b) \mapsto (b+1,a).$$
If $n$ even then $\dim (A(K,\Z/(n),r)_{2}) =\frac{n}{2}$ and if $n$
is odd then $\dim (A(K,\Z/(n),r)_{2}) =\frac{n+1}{2}$.

Note that, in this case, $r_1^2(a,b,c)=(a+1,b+1,c)$. Hence
$(a+c-b,c,c)\in O_{(a,b,c)}$. Now
$r_2^{2(a-b)}(a+c-b,c,c)=(a+c-b,a+c-b,a+c-b)\in O_{(a,b,c)}$. Since
$r_2r_1^2(x,x,x)=(x+1,x+1,x+1)$, we have that $(0,0,0)\in
O_{(a,b,c)}$. Therefore $(\Z/(n))^3=O_{(0,0,0)}$. Thus
$$\dim (A(K,\Z/(n),r)_{3}) =1=\dim (A(K,\Z/(n),r)_{m}),$$
for all $m\geq 3$. Hence $\GK(A(K,\Z/(n),r))=1$.

Given a non-degenerate braided set $(X,r)$, Soloviev in
\cite{Soloviev} introduced its derived solution $(X,r')$ as follows.
Write $r(x,y)=(\sigma_x (y), \gamma_y (x))$ for all $x,y\in X$.
Then
 $$r'(x,y) =(\gamma_x (\sigma_{\gamma^{-1}_{y}(x)}(y)), x)$$
for all $x,y\in X$. Note that structure algebras
$A=A(K,X,r)=\oplus_{m\geq 0} A_m$ and $A'=A(K,X,r')=\oplus_{m\geq
0}A'_{m}$ are naturally  positively graded algebras (with $x\in X$
of degree 1). Because of Proposition~1.4 in \cite{JKV} there
exists a bijective $1$-cocycle $\phi: M(X,r) \rightarrow M(X,r')$
such that $\phi (x) =x$ and $\phi$ is degree preserving. In
particular, for any field $K$,
   $$\dim (A_m) =\dim (A'_{m}).$$ Hence, the number of
$r'$-orbits in $X^2$ is the same  as the number of $r$-orbits in
$X^2$. The following is now an immediate application of
Theorem~\ref{GeneralMinDim}.

\begin{corollary}\label{NDBS}
Let $(X,r)$ be a finite non-degenerate braided set. Let $K$ be a
field and $A=A(K,X,r)$, the structure algebra. Then, $\dim (A_{2})
\geq \frac{|X|}{2}$.
\end{corollary}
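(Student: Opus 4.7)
The plan is to reduce the statement to the $\SD$ case already handled in Theorem~\ref{GeneralMinDim}, via the passage to the derived solution $(X,r')$. Recall that, as noted in the paragraph immediately preceding the corollary, $(X,r')$ is a non-degenerate $\SD$ braided set (by Soloviev's \cite[Theorem~2.3]{Soloviev}) on the \emph{same} underlying set $X$, so $|X|$ is unchanged.

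The main ingredient I would invoke is \cite[Proposition~1.4]{JKV}, which supplies a bijective $1$-cocycle $\phi\colon M(X,r)\to M(X,r')$ with $\phi(x)=x$ for $x\in X$, and crucially is degree preserving with respect to the natural length gradings. This gives $|M_m|=|M'_m|$ for all $m\geq 0$, and after linearly extending to the monoid algebras yields $\dim(A_m)=\dim(A'_m)$, where $A'=A(K,X,r')=\bigoplus_{m\geq 0}A'_m$. In particular $\dim(A_2)=\dim(A'_2)$.

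Now apply Theorem~\ref{GeneralMinDim} to the non-degenerate $\SD$ braided set $(X,r')$ to obtain
\[
\dim(A'_2)\;\geq\;\frac{|X|}{2},
\]
and combine with the equality $\dim(A_2)=\dim(A'_2)$ to finish the proof.

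There is essentially no obstacle here: all the work was done in Theorem~\ref{GeneralMinDim} (where the $\SD$ condition $\gamma_y=\id$ was used to compute $r$-orbits in $X^2$ explicitly via Lemma~\ref{inductive}) and in the cited \cite[Proposition~1.4]{JKV}. The only point worth double-checking is that $\phi$ really is degree preserving, so that counting $r$-orbits in $X^2$ truly equals counting $r'$-orbits in $X^2$; this is immediate from $\phi(x)=x$ for generators together with the $1$-cocycle identity $\phi(ab)=\phi(a)\,\theta(a)(\phi(b))$, since $\theta(a)$ permutes $X$ and thus preserves length.
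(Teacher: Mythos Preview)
Your proposal is correct and follows essentially the same approach as the paper: the corollary is stated there as an immediate application of Theorem~\ref{GeneralMinDim}, using the derived solution $(X,r')$ together with the degree-preserving bijective $1$-cocycle of \cite[Proposition~1.4]{JKV} to obtain $\dim(A_2)=\dim(A'_2)$.
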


\begin{example}
Let $(X,r)$ be the quadratic set, with $X=\Z/(n)$, for an integer
$n>1$, and $r(x,y)=(y-1,x+2)$ for all $x,y\in X$. Then $(X,r)$ is a
non-degenerate braided set, its derived solution is $(X,r')$, with
$r'(x,y)=(y+1,x)$, for all $x,y\in X$, that we have studied just
before Corollary~\ref{NDBS}.  Thus, $\dim(A(K,X,r)_m)=\dim(A(K,X,r')_m)=1$
for all $m>2$, so that
$\GK(A(K,X,r))=\GK(A(K,X,r'))=1$ for any field $K$. Furthermore
$\dim(A(K,X,r)_2)=n/2$ if $n$ is even, and
$\dim(A(K,X,r)_2)=(n+1)/2$ if $n$ is odd.
\end{example}

\section{Some examples: the dihedral quandles} \label{quandles}
In this section we study the braided sets associated to the
dihedral quandles. These braided sets will be crucial for the main
result of this paper (proved in Section~\ref{sf-braided}).

Note that if $(X,r)$ is a finite $2$-cancellative square-free
non-degenerate braided set $(X,r)$ of cardinality $n$, then the
cardinality of  each $r$-orbit  in $X^2$  is at most $n$. Since
$n^2=n+(n-1)n$ and $X^2$ has at least $n$ orbits of cardinality $1$,
we have that $X^2$ has at least $2n-1$ orbits.

The following example is due to Leandro Vendramin (see
\cite[Remark~4.13]{GI2018}).

\begin{example} \label{dihedral}
Let $K$ be a field. Let $p$ be a prime number. Then there exists a
$2$-cancellative, non-degenerate, square-free braided set $(X,r)$
such that $|X|=p$ and its associated $K$-algebra $A=A(K,X,r)$
satisfies $\dim(A_2)=2p-1$.
\end{example}

\begin{proof}
Consider the dihedral group $D_{2p}=\langle g,s\mid g^p=s^2=1,\;
sgs=g^{-1}\rangle$. Let $X=\{ g^ks\mid 0\leq k<p\}$ and let $r\colon
X\times X\longrightarrow X\times X$ be the map defined by
$r(a,b)=(aba^{-1},a)$, for all $a,b\in X$. Clearly $(X,r)$ is a
non-degenerate square-free quadratic set. Note that
$$r(g^ks,g^ts)=(g^ksg^tssg^{-k},g^ks)=(g^{2k-t}s,g^ks),$$
$$r^2(g^ks,g^ts)=(g^{3k-2t}s,g^{2k-t}s),$$
and by induction it is easy to see that  for $m\geq 1$ we have
$$r^m(g^ks,g^ts)=(g^{(m+1)k-mt}s,g^{mk-(m-1)t}s).$$
Let $n$ be the smallest positive integer such that
$$(g^ks,g^ts)=(g^{(n+1)k-nt}s,g^{nk-(n-1)t}s).$$
Then $n(k-t)\equiv 0\; (\Mod p)$. Thus either $k\equiv t\; (\Mod p)$
or $n=p$. Hence $(X,r)$ is $2$-cancellative and $\dim(A_2)=2p-1$.
Note that
$$r_1r_2r_1(a,b,c)\;=\;r_1r_2(aba^{-1},a,c)\;=\;r_1(aba^{-1},aca^{-1},a)\;=\;(abcb^{-1}a^{-1},aba^{-1},a)$$ and
$$r_2r_1r_2(a,b,c)\;=\;r_2r_1(a,bcb^{-1},b)\;=\;r_2(abcb^{-1}a^{-1},a,b)\;=\;(abcb^{-1}a^{-1},aba^{-1},a).$$
Hence the result follows.
\end{proof}

The example constructed above is the braided set corresponding to
the dihedral quandle of order $p$.  Note that it is isomorphic to
the braided set $(\Z/(p),r)$, where $p$ is prime and $r$ is
defined by $r(k,s)=(2k-s, k)$, for all $k,s\in \Z/(p)$.

\begin{proposition}
Let $p$ be an odd prime. Let $(X,r)$ be the braided set
corresponding to the dihedral quandle of order $p$. Let $K$ be a
field. Let $A_n=Span_K\{y_1\cdots y_n\mid y_1,\dots ,y_n\in X\}$
in $A(K,X,r)$. Then $\dim(A_n)=2p$, for all $n\geq 3$, and thus
the $GK$-dimension of $A(K,X,r)$ is $\GK(A(K,X,r))=1$.
\end{proposition}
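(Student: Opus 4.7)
The plan is to compute $|M_n|=\dim(A_n)$ for every $n\ge 3$, since by Remark~\ref{monoid} this number equals the count of $\langle r_1,\dots,r_{n-1}\rangle$-orbits in $X^n$. The first step is to pin down the obvious invariant. The assignment $k\mapsto g^ks\in D_{2p}$ respects the defining relations (as already seen inside the proof of Example~\ref{dihedral}), so it extends to a monoid morphism $\pi\colon M(X,r)\to D_{2p}$, and a direct computation using $sg^ts=g^{-t}$ yields
\[\pi(a_1\cdots a_n)=g^{W_n}s^{n\bmod 2},\qquad W_n=\sum_{i=1}^{n}(-1)^{i+1}a_i\in\Z/(p).\]
In particular $W_n$ is constant on every $r$-orbit in $X^n$.

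Next I would produce $2p$ manifestly distinct orbits. The $p$ constant tuples $(c,c,\dots,c)$ are fixed by every $r_i$, giving $p$ singleton orbits. For each $w\in\Z/(p)$ the tuple $(w,0,\dots,0)$ (when $w\neq 0$), respectively $(1,1,0,\dots,0)$ (when $w=0$), is non-diagonal, has $W_n=w$ and is moved by either $r_1$ or $r_2$; these represent $p$ further orbits, distinguished from each other by $W_n$ and from the diagonals by failing to be singletons. This already gives $\dim(A_n)\ge 2p$.

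The crux of the proof is the matching upper bound, which I would establish by induction on $n\ge 3$. For the base $n=3$ introduce $u_1=a_2-a_1$, $u_2=a_3-a_2$; then $r_1$ and $r_2$ act on $(u_1,u_2)$ through the shears $\bigl(\begin{smallmatrix}1&0\\1&1\end{smallmatrix}\bigr)$ and $\bigl(\begin{smallmatrix}1&-1\\0&1\end{smallmatrix}\bigr)$, which generate $\mathrm{SL}_2(\F_p)$; since $\mathrm{SL}_2(\F_p)$ is transitive on $\F_p^2\setminus\{(0,0)\}$ and $W_3$ is preserved, the non-diagonal triples with prescribed $W_3$ form a single orbit, so $X^3$ has exactly $2p$ orbits. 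For the inductive step consider a non-diagonal $(a_1,\dots,a_{n+1})$ with $W_{n+1}=w$. Since $r_1,\dots,r_{n-1}$ fix $a_{n+1}$, the inductive hypothesis lets us assume $(a_1,\dots,a_n)$ is in one of the canonical forms. If this is a diagonal $(c,\dots,c)$ then $a_{n+1}\neq c$ and a single application of $r_n$ makes the first $n$ entries non-diagonal, so we may assume $(a_1,\dots,a_n)$ is the canonical non-diagonal; successive applications of $r_n,r_{n-1},\dots,r_2$ then propagate $a_{n+1}$ leftward (each step using $\sigma_0(x)=-x$), and a suitable power of $r_1$ reduces any resulting $(A,B,0,\dots,0)$ with $A\neq B$ to $(A-B,0,\dots,0)$, which is the canonical representative attached to $W_{n+1}$.

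The delicate case, which I expect to be the main obstacle, is $W_{n+1}=0$: the reduction just described may terminate at a tuple of the form $(A,A,0,\dots,0)$ with $A\neq 0$ rather than at the zero tuple. To close the argument I would show $(A,A,0,\dots,0)\sim(1,1,0,\dots,0)$ for every non-zero $A$ by an explicit sequence of moves---applying $r_2$ to split the doubled letter, $r_1$ to absorb the first entry, and a rightward cascade through $r_3,r_2,r_4,r_3,\dots$ to re-concentrate the pair at a shifted position, iterating until the scalar can be renormalised to $1$. Once this lemma is in place the induction closes and $\dim(A_n)=2p$ for every $n\ge 3$. Since $\dim(A_n)$ is then bounded, the Hilbert series of $A=A(K,X,r)$ is a rational function with a simple pole at $t=1$, whence $\GK(A(K,X,r))=1$.
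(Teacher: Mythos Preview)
Your approach is genuinely different from the paper's and the core ideas are sound. The paper works entirely inside the monoid $M(X,r)$: it derives the identity $x_ix_j=x_0x_{j-i}$ for $i\neq j$, uses that each $x_i^2$ is central to show that the $2p$ elements $x_i^n$, $x_0^{n-1}x_j$ (for $j\neq 0$) and $x_0^{n-2}x_1^2$ span $A_n$, and then invokes the homomorphism to $D_{2p}$ to verify they are distinct. You instead count orbits in $X^n$ directly; your $n=3$ argument via the $\mathrm{SL}_2(\F_p)$-action on the difference vector $(u_1,u_2)$ is correct and more conceptual than the paper's explicit word manipulations. The paper's route yields an explicit monomial normal form in $A$, while yours explains structurally why the answer is $2p$: one invariant $W_n\in\Z/(p)$ together with the diagonal/non-diagonal dichotomy.

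Your inductive step has two loose ends, though both are easier to close than you suggest. First, when the first $n$ entries reduce to the $W_n=0$ canonical form $(1,1,0,\dots,0)$, position $2$ is not zero, so your ``propagate $a_{n+1}$ leftward via $\sigma_0$'' recipe does not terminate in the shape $(A,B,0,\dots,0)$; this branch is simply not addressed. Second, the ``delicate case'' $(A,A,0,\dots,0)\sim(1,1,0,\dots,0)$ needs no elaborate cascade of moves: apply your own $n=3$ result to the first three coordinates, since $(A,A,0)$ and $(1,1,0)$ are non-diagonal triples with $W_3=0$ and hence lie in a single $\langle r_1,r_2\rangle$-orbit. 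The same trick resolves the first gap as well: propagate $a_{n+1}$ only as far as position $3$ using $r_n,\dots,r_3$, obtaining $(1,1,c,0,\dots,0)$ with $c=(-1)^{n}a_{n+1}=W_{n+1}$, and then use $\mathrm{SL}_2$-transitivity on the first three coordinates to reach the canonical form.
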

\begin{proof}
We may assume that $X=\{ x_i\mid i\in \Z/(p)\}$,
$r(x_i,x_j)=(x_{\sigma_i(j)},x_i)$, for all $i,j\in \Z/(p)$ and
$\sigma_{i}(j)=2i-j$. Hence in $M(X,r)$ we have that
$$x_{i}x_{j}=x_{2i-j}x_i,$$
for all $i,j\in\Z/(p)$. Hence $x_{i}^2x_j=x_jx_i^2$, for all $i,j\in
\Z/(p)$. Let $i,j\in\Z/(p)$ be distinct elements. Note that
$$x_ix_j=x_{2i-j}x_i=x_{3i-2j}x_{2i-j}=\dots =x_{(t+1)i-tj}x_{ti-(t-1)j},$$
for all $t\in\Z/(p)$. Since $i\neq j$ and $p$ is a prime, there
exists $t\in \Z/(p)$ such that $(t+1)i-tj=0$, thus $ti-(t-1)j=j-i$.
Hence
\begin{equation}\label{eqGK1}
x_ix_j=x_0x_{j-i}.
\end{equation}

Let $n$ be an integer $n\geq 3$. We shall prove by induction on $n$
that
\begin{equation}\label{basis}(x_0^n,\dots ,x_{p-1}^n,
x_0^{n-1}x_1,\dots ,x_0^{n-1}x_{p-1}, x_0^{n-2}x_1^2)\end{equation}
is a basis of $A_n$, and therefore $\dim(A_n)=2p$.

First consider the map $f\colon X\longrightarrow D_{2p}$ defined
by $f(x_i)=g^{i}s$. Since $g^{i}sg^{j}s=g^{i-j}=g^{2i-j}sg^{i}s$,
$f$ extends to a homomorphism $f'\colon M(X,r)\longrightarrow
D_{2p}$. Note that $f'(x_0^{n-2}x_1^2)=s^{n-2}$ and
$f'(x_0^{n-1}x_i)=s^{n-1}g^{i}s$. Therefore the $2p$ elements
$x_i^n$, $x_0^{n-2}x_1^2$ and $x_0^{n-1}x_j$, for $i,j\in\Z/(p)$
with $j\neq 0$, are distinct.

For $n=3$, by (\ref{eqGK1}), we have that $$\{ x_i^3\mid i\in
\Z/(p)\}\cup\{ x_0x_i^2\mid
i\in\Z/(p)\setminus\{0\}\}\cup\{x_0^2x_i\mid
i\in\Z/(p)\setminus\{0\}\}$$ generates $A_3$ as a vector space. By
(\ref{eqGK1}) and the defining relations of $M(X,r)$, for $i\neq
0$ we have
$$x_0x_i^2=x_1x_{i+1}x_i=x_1x_0x_{-1}=x_1(x_1x_0)=x_0x_1^2,$$
where the last equality follows because $x_1^2$ is a central
element of $M(X,r)$. Hence
$$\{ x_i^3\mid i\in
\Z/(p)\}\cup\{ x_0x_1^2\}\cup\{x_0^2x_i\mid
i\in\Z/(p)\setminus\{0\}\}$$ generates $A_3$.  Thus (\ref{basis}) is
a basis of $A_n$ for $n=3$.

Suppose that (\ref{basis}) is a basis of $A_n$ for some $n\geq 3$.
Let $x_{i_1},\dots ,x_{i_{n+1}}\in X$. Then either
$x_{i_j}=x_{i_k}$, for all $j,k$, or there exists $0\leq j<n+1$ such
that $x_{i_j}\neq x_{i_{j+1}}$. In the second case, by (\ref{eqGK1})
and the defining relations, we have that
$$x_{i_1}x_{i_2}\cdots x_{i_{n+1}}
x_{i_1}\cdots x_{i_{j-1}}(x_0x_{i_{j+1}-i_{j}})x_{i_{j+2}}\cdots
x_{i_{n+1}} =x_0x_{-i_1}\cdots
x_{-i_{j-1}}x_{i_{j+1}-i_{j}}x_{i_{j+2}}\cdots x_{i_{n+1}}.
$$
Hence, by the induction hypothesis
$$\{ x_i^{n+1}\mid i\in
\Z/(p)\}\cup\{ x_0x_i^n\mid
i\in\Z/(p)\setminus\{0\}\}\cup\{x_0^{n}x_i\mid
i\in\Z/(p)\setminus\{0\}\}\cup\{x_0^{n-1}x_1^2\}$$ generates
$A_{n+1}$. We claim that if $i\neq 0$, then
$$x_0x_i^n=\left\{\begin{array}{ll}
x_0^nx_i&\quad\mbox{if $n$ is odd}\\
x_0^{n-1}x_1^2&\quad\mbox{if $n$ is even}\end{array}\right.$$
Suppose that $n$ is odd. In this case, since
$x_0x_i^2=x_0x_1^2=x_1^2x_0$, we have that
$$x_0x_i^n=x_0x_1^{n-1}x_i.$$
Note also that $x_0x_1^2=x_{-1}^2x_0$. Hence, we also have that
$$x_0x_i^n=x_0x_{-1}^{n-1}x_i.$$
If $i\neq 1$, then, by (\ref{eqGK1}),
$$x_1^2x_i=x_1x_0x_{i-1}=x_0x_{-1}x_{i-1}=x_0^2x_i.$$
If $i=1$, then $i\neq -1$ and by (\ref{eqGK1}),
\begin{equation}\label{eqx1xi}
x_{-1}^2x_i=x_{-1}x_0x_{i+1}=x_0x_{1}x_{i+1}=x_0^2x_i.
\end{equation}
Therefore, since the elements $x_j^2$ are central in $M(X,r)$, we
obtain that
$$x_0x_i^n=x_0^{n}x_i.$$
Suppose that $n$ is even. In this case, since
$x_0x_i^2=x_0x_1^2=x_1^2x_0$, we have that
$$x_0x_i^n=x_0x_1^{n}.$$
Since $x_0x_1^2=x_{-1}^2x_0$, we also have that
$$x_0x_i^n=x_0x_{-1}^{n-2}x_1^2.$$
By (\ref{eqx1xi}),
$$x_0x_i^n=x_0^{n-1}x_1^2.$$
Hence the claim is proved. Therefore,
$$\{ x_i^{n+1}\mid i\in
\Z/(p)\}\cup\{x_0^{n}x_i\mid
i\in\Z/(p)\setminus\{0\}\}\cup\{x_0^{n-1}x_1^2\}$$ generates
$A_{n+1}$. Hence, we have proved by induction that  (\ref{basis}) is
a basis of $A_n$ for all $n\geq 3$ and thus $\dim(A_n)=2p$. By
Example~\ref{dihedral}, $\dim(A_2)=2p-1$. Hence, for every $n\geq
2$,
$$\dim(\oplus_{i=0}^nA_i)=1+p+2p-1+(n-2)2p=2pn-p.$$
Thus $\GK(A(K,X,r))=1$, and the result is proved.
\end{proof}

Note that, for $p=2$, the braided set corresponding to the
dihedral quandle of order $2$ is the trivial braided set $(X,r)$
of cardinality 2, that is $r(x,y)=(y,x)$, for all $x,y\in X$. In
this case $M(X,r)$ is the free abelian monoid of rank 2 and, for
every field $K$, $\GK(A(K,X,r))=2$.

 The following result completes our discussion of the dihedral
quandles and  is needed to prove Theorem~\ref{minimality}.

\begin{proposition}\label{quandle2}
Let $n$ be an integer greater than $2$. Let $(X,r)$ be the braided
set corresponding to the dihedral quandle of order $n$. Let $K$ be a
field. Then $\dim(A(K,X,r)_2)=2n-1$ if and only if $n$ is prime.
\end{proposition}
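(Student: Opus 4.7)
My plan is to reduce the problem to a clean divisor-sum identity. First I would identify $(X,r)$ with the concrete model $(\Z/(n), r)$ where $r(i,j)=(2i-j,\,i)$, as noted just after Example~\ref{dihedral}. Then, by a straightforward induction on $m$ (entirely parallel to the computation carried out in the proof of Example~\ref{dihedral}, which works verbatim for arbitrary $n$, not just primes), I would establish
$$r^m(i,j) = \bigl((m+1)i - mj,\; mi - (m-1)j\bigr) \pmod{n}$$
for all $m\geq 0$ and all $i,j\in\Z/(n)$. This immediately yields that $r^m(i,j)=(i,j)$ iff $m(i-j)\equiv 0 \pmod{n}$, so the $r$-orbit of $(i,j)$ has cardinality $n/\gcd(i-j,n)$ when $i\neq j$, while each $(i,i)$ is a fixed point.

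Next I would count orbits. There are exactly $n$ singleton (diagonal) orbits. For the off-diagonal pairs, I would group them by the value $d=\gcd(i-j,n)$, where $d$ ranges over divisors of $n$ with $d<n$. For each such $d$, the set of residues $c\in\Z/(n)$ with $\gcd(c,n)=d$ has cardinality $\phi(n/d)$, and for each such $c$ there are $n$ pairs $(i,j)$ with $i-j=c$; hence the number of off-diagonal pairs with $\gcd(i-j,n)=d$ is $n\phi(n/d)$. Dividing by the orbit size $n/d$ yields $d\phi(n/d)$ orbits, so summing gives
$$\dim(A_2) \;=\; n + \sum_{d\mid n,\; d<n} d\,\phi(n/d).$$

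Finally, substituting $e=n/d$ (so that $e$ ranges over divisors of $n$ with $e>1$) I would rewrite the sum as
$$\dim(A_2) \;=\; n + \sum_{e\mid n,\; e>1} \frac{n}{e}\,\phi(e).$$
Using the trivial bound $(n/e)\phi(e)\geq \phi(e)$ together with the classical identity $\sum_{e\mid n}\phi(e)=n$, I obtain
$$\dim(A_2) \;\geq\; n + \sum_{e\mid n,\; e>1} \phi(e) \;=\; n + (n-1) \;=\; 2n-1,$$
with equality if and only if $n/e=1$ for every divisor $e>1$ of $n$, that is, iff the only divisor of $n$ strictly greater than $1$ is $n$ itself, iff $n$ is prime. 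Combined with Example~\ref{dihedral}, this yields the claimed equivalence.

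The argument is essentially a routine orbit count and I do not anticipate a genuine obstacle; the only step that requires a bit of care is the clean bookkeeping in the substitution $e=n/d$ and the observation that the inequality $(n/e)\phi(e)\geq \phi(e)$ becomes strict as soon as $n$ admits a divisor $e$ with $1<e<n$, which is precisely the obstruction to primality.
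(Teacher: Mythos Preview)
Your argument is correct. The orbit formula $r^m(i,j)=((m+1)i-mj,\,mi-(m-1)j)$ is established exactly as in Example~\ref{dihedral}, and the key observation that the difference $i-j$ is preserved along an $r$-orbit (which is implicit in your count) justifies stratifying pairs by $d=\gcd(i-j,n)$ and dividing by the orbit size $n/d$. The resulting closed formula $\dim(A_2)=n+\sum_{e\mid n,\,e>1}(n/e)\phi(e)$ and the use of $\sum_{e\mid n}\phi(e)=n$ give the equivalence cleanly; the final appeal to Example~\ref{dihedral} is in fact redundant, since your inequality argument already handles both directions.

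Your route differs from the paper's. The paper does not compute an exact formula; instead, after observing $2$-cancellativity (so every non-diagonal orbit has size at most $n$, hence $\dim(A_2)=2n-1$ forces all non-diagonal orbits to have size exactly $n$), it exhibits, for composite $n$, a specific off-diagonal orbit of size strictly less than $n$: the orbit of $(n/2,0)$ when $n$ is even, and the orbit of $(a,0)$ with $n=ab$, $2<a,b<n$, when $n$ is odd. Your approach trades this case split for the divisor-sum identity and, as a bonus, produces the exact value of $\dim(A_2)$ for every $n$, which the paper's argument does not.
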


\begin{proof} We may assume that $X=\Z/(n)$,
$r(i,j)=(\sigma_i(j),i)$, for all $i,j\in \Z/(n)$ and
$\sigma_{i}(j)=2i-j$. Note that $\sigma_x\sigma_y(z)=z+2(x-y)$, for
all $x,y,z\in X$. By Corollary~\ref{rn},
\begin{eqnarray*}
r^{2k+1}(x,y)&=&((\sigma_x\sigma_y)^{k+1}(y),(\sigma_x\sigma_y)^{k}(x))\\
&=&(y+2(k+1)(x-y),x+2k(x-y)),\\
r^{2k}(x,y)&=&((\sigma_x\sigma_y)^{k}
(x),(\sigma_x\sigma_y)^{k}(y))\\
&=&(x+2k(x-y),y+2k(x-y)),
\end{eqnarray*}
for all non-negative integers $k$ and $x,y\in X$. Now it is easy to
see that $(X,r)$ is $2$-cancellative. Hence $\dim(A(K,X,r)_2)=2n-1$
if and only if there are $n$ $r$-orbits of cardinality $1$ and $n-1$
$r$-orbits of cardinality $n$ in $X^2$.

By Example~\ref{dihedral}, if $n$ is prime, then
$\dim(A(K,X,r)_2)=2n-1$.

So, for the remainder of the proof we may suppose that $n$ is not
prime.

First, suppose that $n$ is even. Thus $n=2t$ for some positive
integer $t$. Now $\sigma_t(0)=0$ and $\sigma_0(t)=t$. In this
case, the $r$-orbit of $(t,0)$ is $O_{(t,0)}=\{ (t,0),(0,t)\}$.
Therefore $\dim(A(K,X,r)_2)>2n-1$.

Second, suppose that $n$ is odd. Then there exist integers
$2<a,b<n$ such that $ab=n$. By Corollary~\ref{rn},
\begin{eqnarray*}
r^{2k}(a,0)&=&((\sigma_a\sigma_0)^{k}
(a),(\sigma_a\sigma_0)^{k}(0))\\
&=&(a+2ka,2ka)
\end{eqnarray*}
for all non-negative integers $k$. Hence the $r$-orbit of $(a,0)$
has cardinality $|O_{(a,0)}|\leq 2b<ab=n$. Thus, also in this case,
we have that $\dim(A(K,X,r)_2)>2n-1$.

Therefore, the result follows.
\end{proof}

\section{Square-free non-degenerate braided sets}  \label{sf-braided}
 In this section we prove the main result of this paper. We
begin by proving a lower bound for the dimension of $A(K,X,r)_2$
where $(X,r)$ is a finite non-degenerate square-free braided  set.
In the main results, Theorem~\ref{minimal} and
Corollary~\ref{cor-minimal}, we  characterise when  this lower bound
is achieved.

Note that if $(X,r)$ is a square-free quadratic set, then the
$r$-orbits of the elements $(x,x)$ in $X^2$ have cardinality $1$.
Therefore, if $X$ is finite of cardinality $n>1$, then the number of
orbits in $X^2$ is at least $n+1$. The following example shows that
this bound is attainable for finite non-degenerate square-free
quadratic sets.

\begin{example} \label{dim-An-sf} Let $K$ be a field. Let $n>1$ be an integer.
Then there exists a square-free non-degenerate quadratic set $(X,r)$
such that $|X|=n$ and  $\dim(A_m)=n+1$ for all $m>1$, and its
associated $K$-algebra $A=A(K,X,r)$ has $GK$-dimension $1$.
\end{example}

\begin{proof}
Let $X=\{ 1,2,\dots ,n\}$ and let $r\colon X\times X\longrightarrow
X\times X$ be the map defined by $r(x,y)=(\sigma_x(y),x)$ for all
$x,y\in X$, where $\sigma_{n-1}=\id_X$, $\sigma_n=(1,2,\dots ,n-1)$
and, for $i\in X\setminus\{ n-1,n\}$, the permutations
$\sigma_{i}\in \Sym_n$ are defined as follows:
$$\sigma_{i}=(i+1,i+2,\dots, n).$$
Note that
\begin{eqnarray*}&&(1,n)\overset{r}{\mapsto}(2,1)\overset{r}{\mapsto}
(1,2)\overset{r}{\mapsto} \cdots\overset{r}{\mapsto}
(n-1,1)\overset{r}{\mapsto} (1,n-1)\overset{r}{\mapsto}
(n,1)\\
&&\overset{r}{\mapsto} (2,n)\overset{r}{\mapsto}
(3,2)\overset{r}{\mapsto} (2,3)\overset{r}{\mapsto} \cdots
\overset{r}{\mapsto}(n-1,2)\overset{r}{\mapsto}
(2,n-1)\overset{r}{\mapsto} (n,2)\\
&&\overset{r}{\mapsto} (3,n)
\overset{r}{\mapsto} \cdots\\
&&\overset{r}{\mapsto}(n-1,n)\overset{r}{\mapsto}
(n,n-1)\overset{r}{\mapsto} (1,n).
\end{eqnarray*}
Hence $(X,r)$ is a square-free non-degenerate quadratic set. Let
$A=A(K,X,r)$.   By Remark~\ref{monoid}, the number of elements of
length $2$ in $M(X,r)$ is $n+1$, and thus $\dim(A_2)=n+1$. It is
easy to see that $\dim(A_m)=n+1$ for all $m\geq 2$. Hence $\dim_K
(\oplus_{i=0}^mA_i)=(n+1)m$, for every positive integer $m$.
Therefore $\GK(A)=1$ and this finishes the proof.
\end{proof}

The main results of this section are concerned with the
minimality condition for finite square-free non-degenerate braided
sets.

\begin{theorem} \label{SquareFreeMinDim}
Let $(X,r)$ be a finite square-free non-degenerate braided set. Then
the number  of $r$-orbits in $X^{2}$ is at least $2|X|-1$, that is
$\dim A_2 \geq 2 |X|-1$ where $A=A(K,X,r)$.
\end{theorem}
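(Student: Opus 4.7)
My plan would be to first reduce to the $\SD$ case via the derived solution $(X,r')$ introduced in Section~\ref{sect-non-deg}: since $\dim A(K,X,r)_2 = \dim A(K,X,r')_2$ and square-freeness is easily seen to be inherited by $(X,r')$ from the explicit formula together with $\sigma_x(x)=\gamma_x(x)=x$, one may assume $r(x,y)=(\sigma_x(y),x)$ with $\sigma_x(x)=x$ for every $x\in X$, so that $(X,\lhd)$ with $x\lhd y=\sigma_x(y)$ is a finite quandle. Writing $n=|X|$, the $n$ diagonal pairs yield $n$ singleton orbits in $X^2$, and the task reduces to exhibiting at least $n-1$ further orbits among the non-diagonal pairs. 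I would then induct on $n$, the bases $n=1,2$ being immediate (squarefreeness forces $\sigma_x=\id$ when $n\leq 2$).

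For the inductive step I would split according to the retraction relation $x\sim y\iff\sigma_x=\sigma_y$, partitioning $X$ into classes $C_1,\ldots,C_r$ of sizes $c_i$. In the \emph{retractable case} $r<n$, the braid identity gives $\sigma_{\sigma_a(b)}=\sigma_a\sigma_b\sigma_a^{-1}=\sigma_a$ for $a,b\in C_i$, so $C_i$ is $\sigma_a$-stable; moreover, the common permutation $\sigma_a=\sigma_b$ fixes $b$ because $\sigma_b(b)=b$, whence $\sigma_a(b)=b$. Hence the restriction of $r$ to $C_i^2$ is the flip $(a,b)\mapsto(b,a)$, contributing $c_i+\binom{c_i}{2}$ orbits inside each diagonal block. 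The retract $(\bar X,\bar r)$ is a square-free non-degenerate $\SD$ braided set of size $r<n$, so by induction it has at least $r-1$ non-diagonal $\bar r$-orbits, and each of these lifts to at least one $r$-orbit inside the corresponding off-diagonal preimage $\bigcup_{(\bar x,\bar y)\in\bar O}C_{\bar x}\times C_{\bar y}$. Summing yields a total of at least $n+\sum_i\binom{c_i}{2}+(r-1)\geq 2n-1$ orbits, where the last inequality uses the elementary bound $\binom{c}{2}\geq c-1$ for $c\geq 1$.

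In the \emph{irretractable case} $r=n$, the map $x\mapsto\sigma_x$ is an injective quandle homomorphism of $(X,\lhd)$ into the conjugation quandle on $\Sym_X$, i.e.\ $(\Sym_X,\lhd')$ with $\alpha\lhd'\beta:=\alpha\beta\alpha^{-1}$, and its image is a sub-quandle. The key observation is a conservation law in the conjugation quandle on any group $G$: the $v$-sequence $v_0=\beta$, $v_1=\alpha$, $v_{k+1}=v_kv_{k-1}v_k^{-1}$ describing the $r$-orbit of $(\alpha,\beta)$ satisfies $v_{k+1}v_k=v_kv_{k-1}$, hence $v_{k+1}v_k=\alpha\beta$ for all $k$ by induction. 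Consequently, if $(\alpha,\beta')$ lies in the orbit of $(\alpha,\beta)$ then $\alpha\beta'=\alpha\beta$, forcing $\beta'=\beta$, and analogously for the first coordinate. Thus the conjugation quandle and each of its sub-quandles is $2$-cancellative, and this property pulls back through the embedding to $(X,r)$. Applying the observation recorded just before Example~\ref{dihedral}, every $r$-orbit in $X^2$ then has cardinality at most $n$, so the $n(n-1)$ non-diagonal pairs fall into at least $n-1$ orbits and the theorem follows.

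The principal difficulty is the irretractable case: one must spot the conserved product $v_{k+1}v_k$ in the conjugation quandle in order to secure $2$-cancellation for $(X,r)$ itself. Once this is in hand, the retractable case is a straightforward induction driven by the inequality $\binom{c}{2}\geq c-1$.
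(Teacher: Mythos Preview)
Your proof is correct, but it takes a more elaborate route than the paper's. Both arguments begin identically: reduce to the $\SD$ case, partition $X=\bigcup_i C_i$ by the relation $\sigma_x=\sigma_y$, and observe that squarefreeness forces $r$ to restrict to the flip on each $C_i^2$, giving $c_i+\binom{c_i}{2}$ orbits there. The divergence is in how the remaining off-diagonal orbits are counted.

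The paper does not split into cases or use induction. Instead it uses directly the conservation law you discovered in your irretractable case: the monoid homomorphism $M(X,r)\to\mathcal G(X,r)$, $a\mapsto\sigma_a$, shows that $\sigma_a\sigma_b$ is constant along the $r$-orbit of $(a,b)$. Fixing one class $C_1$ and one representative $a_\tau$ in each class $C_\tau$, the orbits $O_{(a_1,a_\tau)}$ for $\tau\neq 1$ are then pairwise distinct (their invariants $\sigma_1\sigma_\tau$ differ), yielding $|\mathcal S|-1=r-1$ further orbits. The total is $\sum_i\bigl(c_i+\binom{c_i}{2}\bigr)+(r-1)\geq\sum_i(2c_i-1)+(r-1)=2n-1$, exactly your final inequality but obtained uniformly, without retracting or inducting.

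Your inductive treatment of the retractable case is a valid detour that reaches the same count $r-1$ of off-diagonal orbits via the retract, but it is unnecessary once you have the conservation law, which works equally well whether or not the $\sigma_x$ are distinct. On the other hand, your irretractable argument proves something strictly stronger than the paper needs here: full $2$-cancellativity of $(X,r)$, which the paper only establishes later (in the proof of Theorem~\ref{minimality}) under the additional minimality hypothesis. So your approach front-loads a fact that becomes useful downstream.
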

\begin{proof}
Note that the derived braided set $(X,r')$ of $(X,r)$ also is
square-free. Hence, as explained in  Section~\ref{sect-non-deg},
without loss of generality, we may assume that $(X,r)$ is $\SD$,
that is $r(a,b)=(\sigma_a (b), a)$, for all $a,b\in X$ and $\sigma_a
\in \Sym (X)$. We use the notation introduced in the proof of
Theorem~\ref{GeneralMinDim}. So $\mathcal{S} =\{ \sigma_a \mid a\in
X\}$ and $X_{\sigma}=\{ a\in X \mid \sigma_a =\sigma \}$, for
$\sigma \in \mathcal{S}$. Hence, $X=\bigcup_{\sigma\in \mathcal{S}}
X_{\sigma}$, a disjoint union. Since, by assumption, $(X,r)$ is
square-free,  for every $\sigma \in \mathcal{S}$ and $a,b\in
X_{\sigma}$, we have that
 $$r(a,b)=(\sigma_a (b),a) =(\sigma (b),a)=(\sigma_{b}(b),a)=(b,a).$$
Thus, the restriction of $r$ to $X_{\sigma}^{2}$ is the trivial
solution. Therefore, for $a,b\in X_{\sigma}$, the orbit
$O_{(a,b)}=\{ (a,b),(b,a)\}$. Hence, there are $|X_{\sigma}| +
{|X_{\sigma}| \choose 2}$ $r$-orbits in $X_{\sigma}^2$.  (Note
that we agree, as is common, that ${1 \choose 2} =0$.)

Let $\sigma , \tau, \tau'\in \mathcal{S}$ be three distinct
elements. Let $a,a'\in X_{\sigma}$, $b\in X_{\tau}$ and $c\in
X_{\tau'}$. We claim that  $O_{(a,b)} \neq  O_{(a',c)}$. We prove
this by contradiction. So, suppose that $O_{(a,b)} = O_{(a',c)}$.
Then, in $M(X,r)$, we have that $ab=a'c$.  By the presentation of
$M(X,r)$ and Lemma~\ref{conditions}, we know that there is a
unique semigroup homomorphism $\varphi : M(X,r) \rightarrow
\mathcal{G}(X,r)=\langle \sigma_a \mid a\in X \rangle$ such that
$\varphi (a) =\sigma_a$ for all $a\in X$. Hence, $\sigma_a
\sigma_b = \sigma_{a'}\sigma_c$. So, $\sigma \tau =\sigma \tau'$,
a contradiction. Hence, the claim is proven.

Therefore, if $a_{\sigma}\in X_{\sigma}$ for $\sigma \in
\mathcal{S}$, then the sets $O_{(a_{\sigma},a_{\tau})}$ are all
different for $\tau \in \mathcal{S}\setminus \{ \sigma \}$. Thus the
number  of $r$-orbits in $X^{2}$ is greater than or equal to
  $$t =|\mathcal{S}| -1 +\sum_{\sigma \in \mathcal{S}}
  \left( |X_{\sigma}| + {|X_{\sigma}| \choose 2}\right) .$$
Since $ |X_{\sigma}| + {|X_{\sigma}| \choose 2} \geq 2|X_{\sigma}|
-1$ we get that $t\geq |\mathcal{S}| -1 +\sum_{\sigma \in
\mathcal{S}}
  \left(2 |X_{\sigma}| -1\right) =2|X|-1$.
\end{proof}

Note that $ |X_{\sigma}| + {|X_{\sigma}| \choose 2} = 2|X_{\sigma}|
-1$ if and only if $1\leq |X_{\sigma}|\leq 2$. Thus, if the number
of $r$-orbits in $X^{2}$ is $2|X|-1$ then $|X_{\sigma}|\leq 2$ for
all $\sigma \in \mathcal{S}$.

Note that the trivial braided set $(X,r)$ with $X=\{1,2\}$ and
$r(x,y)=(y,x)$ for all $x,y\in X$, is a square-free non-degenerate
$\SD$ braided set, such that $\sigma_1=\sigma_2=\id$ and the
number of $r$-orbits in $X^2$ is $3=2|X|-1$.

Another example of a finite square-free non-degenerate
 $\SD$ braided set $(X,r)$ such that $|X_{\sigma}|=2$ for some
$\sigma \in \mathcal{S}$ and the number of $r$-orbits in $X^2$ is
$2|X|-1$ is the following. Let $X=\{ 1,2,3\}$ and
$r(x,y)=(\sigma_x(y),x)$, for all $x,y\in X$, where
$\sigma_1=\sigma_2=\id$ and $\sigma_3=(1,2)$. In this case, the
$r$-orbits in $X^2$ are:
$$\{ (1,1)\},\; \{ (2,2)\},\;\{ (3,3)\},\;\{ (1,2), (2,1)\},
\;\mbox{ and }\;\{ (1,3), (3,1), (2,3), (3,2)\}.$$

Let $(X,r)$ be a finite non-degenerate square-free braided set.
Hence, as before, to study the number of $r$-orbits in $X^2$, we may
assume that $(X,r)$ is $\SD$, that is $r(x,y)=(\sigma_x(y),x)$, for
all $x,y\in X$, where $\sigma_x\in \Sym_X$ satisfies
$\sigma_x(x)=x$, for all $x\in X$.

In the next result we deal with the special case that each
$|X_{\sigma}|=1$, where  $X_{\sigma}=\{ a\in X \mid \sigma_a =\sigma
\}$ and $X=\bigcup_{\sigma\in \mathcal{S}} X_{\sigma}$.

\begin{theorem}\label{minimality}
Let $(X,r)$ be a finite non-degenerate square-free $\SD$ braided
set. Suppose that $|X|>1$ and that the number of $r$-orbits in $X^2$
is $2|X|-1$. Suppose that $r(x,y)=(\sigma_x(y),x)$, for all $x,y\in
X$, and that $\sigma_x\neq \sigma_y$ if $x\neq y$. Then $|X|$ is an
odd prime and $(X,r)$ is the braided set associated to the dihedral
quandle.
\end{theorem}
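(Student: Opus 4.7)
Plan. First I would set up the orbit structure following the argument in the proof of Theorem~\ref{SquareFreeMinDim}. Since $\sigma_x \neq \sigma_y$ whenever $x \neq y$, each set $X_\sigma$ has exactly one element and $|\mathcal{S}| = n := |X|$. Fixing any $x_0 \in X$, that argument shows that the orbits $O_{(x_0, y)}$ for $y \in X \setminus \{x_0\}$ are pairwise distinct, yielding at least $n - 1$ non-diagonal orbits; by minimality these exhaust all of them. The symmetric argument with a fixed second coordinate shows that each non-diagonal orbit also contains exactly one pair ending at any prescribed element. Hence $|O| = n$ for every non-diagonal orbit $O$, and $O$ is the graph $\{(a, \phi_O(a)) : a \in X\}$ of a fixed-point-free permutation $\phi_O \in \Sym_X$.

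Next, comparing $r(a, \phi_O(a)) = (\sigma_a\phi_O(a), a)$ with the unique pair of $O$ ending at $a$, namely $(\phi_O^{-1}(a), a)$, yields the basic identity $\sigma_a(\phi_O(a)) = \phi_O^{-1}(a)$ for all $a \in X$ and all non-diagonal $O$. Iterating $r$ from $(a, \phi_O(a))$ traces out the pairs $(\phi_O^{-k}(a), \phi_O^{-k+1}(a))$, so $|O| = n$ forces each $\phi_O$ to be a single $n$-cycle on $X$. The element $\rho_O := \sigma_a \sigma_{\phi_O(a)} \in \mathcal{G}(X, r)$ depends only on $O$, since same-orbit pairs have equal images under $M(X, r) \to \mathcal{G}(X, r)$; applying Corollary~\ref{rn} to $r^2(a, \phi_O(a))$ together with the basic identity gives $\rho_O(a) = \phi_O^{-2}(a)$ for every $a$, i.e. $\rho_O = \phi_O^{-2}$ in $\Sym_X$.

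The set $F := \{\id\} \cup \{\phi_O\}$ is then a sharply transitive system of $n$ permutations on $X$ whose $n-1$ non-identity members are $n$-cycles, and the central task is to show that $F$ is a cyclic group. By Lemma~\ref{conditions}(i) and the injectivity of $x \mapsto \sigma_x$, each $\sigma_a$ will be an involution as soon as $\phi_O^{-1} = \phi_{O'}$ for some non-diagonal orbit $O'$ --- equivalently, as soon as the graph of $\phi_O^{-1}$ lies in a single $r$-orbit. The $n$ pairs of this graph are distributed among the $n-1$ non-diagonal orbits, and a pigeonhole combined with the basic identity and the quandle relation $\sigma_a \sigma_b \sigma_a^{-1} = \sigma_{\sigma_a(b)}$ rules out any proper splitting: a proper splitting would force a coincidence $\sigma_x = \sigma_y$ with $x \neq y$. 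With involutivity of the $\sigma_a$ in hand, the identity $\rho_O = \phi_O^{-2}$ gives closure of $F$ under composition, so $F$ is a regular subgroup of $\Sym_X$ whose non-identity elements are all $n$-cycles. Such a regular subgroup must be cyclic of prime order, so $n = p$ is prime; moreover $p \neq 2$, because $|X| = 2$ together with square-freeness forces $\sigma_x = \sigma_y = \id$, contradicting the distinctness hypothesis.

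Finally, fixing any $a_0 \in X$ and identifying $X$ with $\Z/(p)$ via $a \mapsto \phi_a \in F$, where $\phi_a$ is the unique element of $F$ sending $a_0$ to $a$, the orbit containing $(a, b)$ is identified with translation by $b - a$, and the basic identity becomes $\sigma_a(b) = \phi_{O(a, b)}^{-1}(a) = 2a - b$, which is exactly the dihedral quandle structure. Therefore $(X, r)$ is isomorphic to the braided set associated to the dihedral quandle of odd prime order $p$. The main obstacle is the pigeonhole-plus-injectivity step showing $\phi_O^{-1} \in F$: this is where the combinatorial orbit structure and the quandle axiom must be combined with care.
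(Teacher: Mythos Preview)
Your setup is sound and the reformulation in terms of the orbit-permutations $\phi_O$ is elegant: the facts that each non-diagonal orbit is the graph of a fixed-point-free $n$-cycle $\phi_O$, that $\sigma_a(\phi_O(a))=\phi_O^{-1}(a)$, and that $\rho_O:=\sigma_a\sigma_{\phi_O(a)}=\phi_O^{-2}$ are all correct and cleanly stated. The difficulty is that your two crucial steps are not justified, and I do not see how to complete them along the lines you sketch. First, the pigeonhole step: knowing that two of the $n$ pairs $(a,\phi_O^{-1}(a))$ fall into a common orbit $O'$ only tells you that $\sigma_{a_i}\sigma_{a_{i+1}}=\sigma_{a_j}\sigma_{a_{j+1}}$ for two indices along the $O$-chain $(a_k)$; translating via the conjugacy formulas $\sigma_{a_{2k+1}}=\sigma^{k}\sigma_{a_1}\sigma^{-k}$, $\sigma_{a_{2k}}=\sigma^{k-1}\sigma_{a_2}\sigma^{-(k-1)}$ this becomes a commutation relation like $[\sigma^{k-l},\sigma_{a_1}\sigma_{a_2}]=1$, which does not force any equality $\sigma_x=\sigma_y$. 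Second, even granting $\sigma_a^2=\id$, the claim that $\rho_O=\phi_O^{-2}$ yields closure of $F$ under composition is unsupported: the set $\{\sigma_{a_0}\sigma_b:b\in X\}$ is closed under inversion (via the quandle relation) but $(\sigma_{a_0}\sigma_b)(\sigma_{a_0}\sigma_c)=\sigma_{a_0}\sigma_{\sigma_b(a_0)}\sigma_b\sigma_c$ need not collapse to a single $\sigma_{a_0}\sigma_d$ without further input.

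The paper circumvents both issues by a group-theoretic count inside $G=\mathcal{G}(X,r)$. One first shows that $x$ is the \emph{unique} fixed point of $\sigma_x$ (otherwise an orbit of size $2$ would appear), whence the point-stabiliser $H_x$ equals the centraliser $C_G(\sigma_x)$ and $Z(G)=1$. Writing $G=\bigcup_x C_G(\sigma_x)\cup Y$ with $Y$ the set of fixed-point-free elements and using $[G:C_G(\sigma_x)]=n$ gives $|Y|=n-1$. Since every $\sigma_x\sigma_y$ with $x\ne y$ lies in $Y$ (it equals some $\phi_O^{-2}$, which is fixed-point-free whether $n$ is even or odd), one gets $Y=\{\sigma_{a_i}\sigma_{a_1}:i\ne1\}$; closure of $Y$ under inversion then forces $\sigma_x^2=\id$ and $n$ odd, after which $Y\cup\{1\}=\langle\sigma\rangle$ is automatic because $\sigma$ already has order $n$. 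This counting replaces both of your missing steps at once; without something of that strength, the pigeonhole-plus-quandle route does not seem to close.
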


\begin{proof}
Note that if $|X|=2$, then, since $(X,r)$ is non-degenerate and
square-free,  $\sigma_x=\id$ for all $x\in X$, thus we get a
contradiction. Hence $|X|>2$.

We use the notation introduced in the proof of
Theorem~\ref{GeneralMinDim}. So $\mathcal{S} =\{ \sigma_a \mid a\in
X\}$ and $X_{\sigma}=\{ a\in X \mid \sigma_a =\sigma \}$, for
$\sigma \in \mathcal{S}$.

By the hypothesis we have that $|X_{\sigma}|=1$ for all $\sigma \in
\mathcal{S}$. Let $(x,y),(z,t)\in X^2$ be two elements in the same
$r$-orbit. By Lemma~\ref{conditions},
$\sigma_x\sigma_y=\sigma_z\sigma_t$. Since $|X_{\sigma}|=1$ for all
$\sigma \in \mathcal{S}$, we have that $x\neq z$ if and only if
$y\neq t$. Therefore $(X,r)$ is $2$-cancellative. Hence
$|O_{(x,y)}|\leq |X|$. Note that if $x,y,z\in X$ are 3 distinct
elements, then $O_{(x,y)}\neq O_{(x,z)}$. Since the number of
$r$-orbits in $X^2$ is $2|X|-1$ and there are $|X|$ singleton orbits
of the form $\{ (x,x)\}$, we have that the other $|X|-1$ orbits are
$O_{(x,y)}$ for some fixed element $x\in X$ and for all $y\in
X\setminus\{ x\}$. Furthermore, the cardinality of every
non-singleton $r$-orbit $O_{(x,y)}$ in $X^2$ is $|X|$.

Hence every non-singleton $r$-orbit in $X^2$ is of the form
\begin{eqnarray}\label{chain}
(a_2,a_1)\overset{r}{\mapsto}(a_3,a_2)\overset{r}{\mapsto}(a_4,a_3)
\overset{r}{\mapsto}\cdots\overset{r}{\mapsto}(a_n,a_{n-1})\overset{r}{\mapsto}(a_1,a_n)
\overset{r}{\mapsto}(a_2,a_1),\end{eqnarray} where
$n=|\{(a_{i+1},a_i)\mid i\in\Z/(n) \}|=|X|$ and $X=\{ a_1,a_2,\dots
, a_n\}$.

By Corollary~\ref{rn},
\begin{eqnarray} \label{rmaps}
r^{2k-1}(a_2,a_1)&=&((\sigma_{a_2}\sigma_{a_1})^{k}(a_{1}),(\sigma_{a_2}\sigma_{a_1})^{k-1}(a_2)),\\
r^{2k}(a_2,a_1)&=&((\sigma_{a_2}\sigma_{a_1})^{k}(a_2),(\sigma_{a_2}\sigma_{a_1})^{k}(a_1)),
\nonumber
\end{eqnarray}
for all positive integers $k$. Let
$\sigma=\sigma_{a_2}\sigma_{a_1}$. Since $(X,r)$ is square-free,
$\sigma_x(x)=x$ for all $x\in X$. Suppose that $\sigma_x(y)=y$,
for some $y\neq x$. By Lemma~\ref{conditions},
$\sigma_x\sigma_y=\sigma_y\sigma_x$. Then $O_{(x,y)}=\{
(x,y),(y,x)\}$, because
$\sigma_{\sigma_y(x)}=\sigma_y\sigma_x\sigma_{y}^{-1}=\sigma_x$
and thus $\sigma_y(x)=x$. But this means that $|X|=2$, a
contradiction. Hence $x$ is the only fixed point of $\sigma_x$.

By Lemma~\ref{inductive},
\begin{equation}
\label{conjugeven}\sigma_{a_{2k+1}}=\sigma^{k}\sigma_{a_1}\sigma^{-k},\quad
 \sigma_{a_{2k}}=\sigma^{k-1}\sigma_{a_2}\sigma^{-k+1},
\end{equation}
for all $k\in \Z/(n)$. In particular $\sigma_{a_1}$ and
$\sigma_{a_3}$ are in the same conjugacy class. Now consider the
$r$-orbit $O_{(a_3,a_1)}$ of $(a_3,a_1)$ in $X^2$. Let $b_1=a_1$ and
$b_2=a_3$. Then the orbit $O_{(a_3,a_1)}=O_{(b_2,b_1)}$ is of the
form (\ref{chain}) changing the $a$'s by $b$'s
\begin{eqnarray*}
(b_2,b_1)\overset{r}{\mapsto}(b_3,b_2)\overset{r}{\mapsto}(b_4,b_3)
\overset{r}{\mapsto}\cdots\overset{r}{\mapsto}(b_n,b_{n-1})\overset{r}{\mapsto}(b_1,b_n)
\overset{r}{\mapsto}(b_2,b_1),\end{eqnarray*} and $X=\{
b_1,b_2,\dots, b_n\}$. Hence  for every $x\in X$, $\sigma_x$ is
either a conjugate of $\sigma_{a_1}$ or a conjugate of
$\sigma_{a_3}$. Therefore, all the $\sigma_{a_j}$ belong to the
same conjugacy class in $\mathcal{G}(X,r)=\langle \sigma_x\mid
x\in X\rangle$. In fact, since
$\sigma_{\sigma_x(y)}=\sigma_x\sigma_y\sigma_x^{-1}$, for all
$x,y\in X$, we have that $\mathcal{S}=\{\sigma_{x}\mid x\in X\}$
is a conjugacy class in $\mathcal{G}(X,r)$ and has cardinality
$n$. By (\ref{rmaps}) applied to $O_{(a_2,a_1)}$ and
$O_{(b_2,b_1)}$, it is clear that $\mathcal{G}(X,r)$ acts
transitively on $X$.

Note also that,
$$\mathcal{G}(X,r)=\langle \sigma_{x},\sigma_{y}\rangle=\langle \sigma_{x},\sigma_y\sigma_x\rangle,$$
for distinct $x,y\in X$. Let $G=\mathcal{G}(X,r)$. Let $Z(G)$ denote
the center of $G$. It follows that $Z(G) =C_{G}(\sigma_{x})\cap
C_{G}(\sigma_{y})$, for every $x\neq y$. Let $\eta\in Z(G)$. Since
$\eta\in C_{G}(\sigma_x)$, we have that
$\eta(x)=\eta(\sigma_x(x))=\sigma_x(\eta(x))$, for all $x\in X$.
Since $x$ is the only fixed point of $\sigma_x$, it follows that
$\eta(x)=x$ for all $x\in X$. Therefore,
\begin{equation}\label{two-cent}
C_{G}(\sigma_{x})\cap C_{G}(\sigma_{y})=Z(G) =1 \quad \mbox{ for
every } x\neq y.
\end{equation}

The argument given above also implies that the subgroup
$H_{a_1}=\{\tau \in G \mid \tau(a_1) =a_1\}$ of $G$ satisfies
$C_{G}(\sigma_{a_1})\subseteq H_{a_1}$. On the other hand, we know
that $[G:C_{G}(\sigma_{a_1})]=n$. Since $G$ acts transitively on
$X$, $[G:H_{a_1}]=n$. It follows that $H_{a_1}=C_{G}(\sigma_{a_1})$.

Define $Y = \{ \tau \in G \mid \tau (x)\neq x \mbox{ for every }
x\in X \}$. Now, we have
\begin{equation*} \label{union}
G=C_{G}(\sigma_{a_1})\cup \cdots \cup C_{G}(\sigma_{a_n})\cup Y,
\end{equation*}
with $Y\cap C_{G}(\sigma_{a_i})=\emptyset$, for all $i=1,\dots
,n$. We also know that $H_{x}= C_{G}(\sigma_{x})$ for every $x$. In
view of (\ref{two-cent}) it follows that
$$|G|=n|C_{G}(\sigma_1)| = n(|C_{G}(\sigma_1)| -1) +1 + |Y|.$$
So $|Y|=n-1$.

If $n$ is odd,  $n=2t+1$, for some $t$, and by (\ref{chain}) and
(\ref{rmaps}) we get
$$\sigma=(a_1,a_3,\dots a_{n},a_2,a_4,\dots ,a_{2t}).$$

If $n$ is even,  $n=2t$, for some $t$, and by (\ref{chain}) and
(\ref{rmaps}) we get
$$\sigma=(a_1,a_3,\dots a_{n-1})(a_2,a_4,\dots ,a_{2t}).$$
Hence, in both cases, $\sigma\in Y$ (as it does not have fixed
points). Considering the $r$-orbits $O_{(x,y)}$ in $X^2$, with
$x\neq y$, we have that $\sigma_x\sigma_y,
\sigma_y^{-1}\sigma_x^{-1}\in Y$. Hence $Y=\{
\sigma_{a_i}\sigma_{a_1}\mid i\neq 1\}$. Therefore, for every $i\neq
1 $ there exists $j\neq 1$ such that $\sigma_{a_i}^{-1}
\sigma_{a_1}^{-1} = \sigma_{a_j} \sigma_{a_1}$. Thus,
$\sigma_{a_i}\sigma_{a_j} = \sigma_{a_1}^{-2}$. Since $a_1$ is a
fixed point of $\sigma_{a_1}$, we have that $i=j$ and
$\sigma_x^{2}=\id$ for all $x\in X$. Hence $\sigma_x$ is a product
of disjoint transpositions and has a unique fixed point. Therefore
$|X|$ is odd.

But $\sigma $ is of order $n$ and $\sigma \in Y$. Therefore $Y\cup
\{1\} = \langle \sigma \rangle$. Clearly, this implies that $\langle
\sigma \rangle$ is a normal subgroup of $G$. So, $G=\langle
\sigma_{a_1},\sigma\rangle$ is a semidirect product
\begin{equation} \label{semidirect} G=\langle \sigma \rangle \rtimes
\langle\sigma_{a_1}\rangle.
\end{equation}
Since $\sigma\sigma_{a_1}\neq\sigma_{a_1}\sigma$, it is clear that
$G$ is the dihedral group of order $2n$.  Let $\sigma(X)=\{
\sigma_{x}\mid x\in X\}\subset G$. Note that the map $f\colon
X\longrightarrow \sigma(X)$ is an isomorphism of the braided sets
$(X,r)$ and $(\sigma(X),s)$, where
$s(\sigma_x,\sigma_y)=(\sigma_x\sigma_y\sigma_x^{-1},\sigma_x)$, for
all $x,y\in X$. Hence $(X,r)$ is the braided set associated to the
dihedral quandle of cardinality $n$. Therefore, by
Proposition~\ref{quandle2}, $n$ is prime and the result follows.
\end{proof}

The following technical lemma is another intermediate step that is
needed to prove the main result of this section. We study the
number of $s$-orbits of a non-degenerate square-free $\SD$ braided
set $(Y,s)$ such that $Y_{\sigma}=\{ a\in Y\mid \sigma_a=\sigma\}$
has cardinality $2$ for all $\sigma\in \mathcal{S}=\{\sigma_y\mid
y\in Y\}$ and $\Ret(Y,s)$ is the braided set associated to the
dihedral quandle of odd prime cardinality.

\begin{lemma}\label{extension}
Let $p$ be an odd prime. Let $X=\Z/(p)$ and let $r\colon
X^2\longrightarrow  X^2$ be the map defined by
$r(i,j)=(\sigma_i(j),i)$, where $\sigma _i(j)=2i-j$, for all $i,j\in
X$. We know that $(X,r)$ is the braided set associated to the
dihedral quandle. Let $Y=X\times \{ 1,2\}$. Let $s\colon
Y^2\longrightarrow Y^2$ be a map such that:
\begin{itemize}
\item[(1)] $s((i,m_1),(j,m_2))=(\sigma_{(i,m_1)}(j,m_2),(i,m_1))$, for all $(i,m_1),(j,m_2)\in Y$.
\item[(2)]$(Y,s)$ is a non-degenerate square-free $\SD$
braided set.
\item[(3)] $\sigma_{(i,1)}=\sigma_{(i,2)}$, for all $i,j\in X$.
\item[(4)] The natural projection $Y\longrightarrow X$ is a
homomorphism of braided sets from $(Y,s)$ to $(X,r)$.
\end{itemize}
Then the number of $s$-orbits in $Y^2$ is greater than $2|Y|-1$.
\end{lemma}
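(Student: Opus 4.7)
My plan is to argue by contradiction, assuming the number of $s$-orbits on $Y^2$ equals the lower bound $2|Y|-1=4p-1$ from Theorem~\ref{SquareFreeMinDim}. The hypotheses let me parameterize $s$ cleanly: because $\sigma_{(i,1)}=\sigma_{(i,2)}$ and $\pi\colon Y\to X$ is a braided-set homomorphism, $\sigma_{(i,m)}$ acts as $\sigma_i$ on the first coordinate, so there exist maps $\epsilon_i\colon \Z/(p)\to \Z/(2)$ with
$$\sigma_{(i,m)}(j,n)=(2i-j,\, n+\epsilon_i(j)).$$
Square-freeness forces $\epsilon_i(i)=0$, and the braided-set axiom (Lemma~\ref{conditions}(i)) for $(Y,s)$ reduces---after the $X$-part is cancelled---to the single identity
$$\epsilon_j(z)+\epsilon_i(2j-z)\equiv \epsilon_i(z)+\epsilon_{2i-j}(2i-z)\pmod{2},\qquad (\ast)$$
valid for all $i,j,z\in\Z/(p)$.

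Next I would count the $s$-orbits by their projection to $X^2$. Orbits lying over the diagonal of $X^2$ are exactly those inside some $Y_\sigma^2$; since $|Y_\sigma|=2$, each such square contributes $|Y_\sigma|+\binom{|Y_\sigma|}{2}=3$ orbits as in the proof of Theorem~\ref{SquareFreeMinDim}, giving $3p$ in total. The remaining orbits lie over the $p-1$ non-diagonal $r$-orbits in $X^2$, each parameterized by $d=i-j\in\Z/(p)\setminus\{0\}$ and of size $p$, with a $4$-element fibre above each point. The $s$-orbits over such an $r$-orbit correspond bijectively to the orbits of $T_d:=s^p$ acting on the fibre $\pi^{-1}(i,j)$, so the equality $4p-1=3p+(p-1)$ forces $T_d$ to be a $4$-cycle on this $4$-element set for every $d\neq 0$. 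A direct computation using Corollary~\ref{rn} to track the second coordinates through $p$ iterations of $s$ gives
$$T_d\bigl((i,m),(j,n)\bigr)=\bigl((i,\,n+C),\;(j,\,m+D)\bigr),\qquad C+D\equiv \sum_{x\in\Z/(p)}\epsilon_x(x-d)\pmod 2,$$
and a short case analysis (noting that $T_d^2$ acts by adding $C+D$ to both second coordinates) shows $T_d$ is a $4$-cycle if and only if $C+D\equiv 1$. Hence the assumed equality forces
$$\Sigma_k:=\sum_{x\in\Z/(p)}\epsilon_x(x+k)\equiv 1\pmod{2}\quad\text{for every }k\neq 0.$$

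Finally I would derive the contradiction from $(\ast)$. Summing $(\ast)$ over $i\in\Z/(p)$ (and using that $p$ is odd) yields the closed form $\epsilon_j(z)\equiv R_z+R_{2j-z}+\Sigma_{j-z}\pmod 2$, where $R_w:=\sum_i\epsilon_i(w)$. Substituting this expression back into $(\ast)$ makes every $R$-term cancel in pairs, leaving the clean relation $\Sigma_{j-z}+\Sigma_{i-2j+z}=\Sigma_{i-z}+\Sigma_{z-j}$. Setting $i=z$ and writing $w=j-z$ gives $\Sigma_w+\Sigma_{-2w}\equiv \Sigma_{-w}\pmod 2$; combining this with its version for $-w$ (and using that $2$ is invertible modulo $p$) first yields $\Sigma_k=\Sigma_{-k}$ for all $k$, and then $\Sigma_{2w}\equiv 0$ for every $w$, so $\Sigma_k=0$ for all $k$. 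This contradicts $\Sigma_k\equiv 1$ for $k\neq 0$, so the equality cannot hold and the number of $s$-orbits in $Y^2$ strictly exceeds $2|Y|-1$. The main obstacle is this final step: one has to sum $(\ast)$ in precisely the right variable to extract a closed form for $\epsilon_j(z)$ that, upon resubstitution, collapses the constraint to a pure relation among the diagonal sums $\Sigma_k$.
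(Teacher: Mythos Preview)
Your argument is correct, and it takes a genuinely different route from the paper's proof.

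The paper proceeds directly rather than by contradiction. In the paper's notation $\alpha_{(i,j)}\in\Sym_2$ (your $\epsilon_i(j)$ written multiplicatively), the braided identity is first specialised to obtain the symmetry $\alpha_{(t,2t-k)}=\alpha_{(t,k)}$, and then a telescoping product argument yields the \emph{Claim} $\prod_{y\in X}\alpha_{(x+y,y)}=\id$ for every $x$, which in your language is exactly $\Sigma_k=0$ for all $k$. From this the paper shows $s^{2p}=\id$ on $Y^2$, so every non-singleton $s$-orbit has size at most $2p$; a straightforward count then gives at least $2p+p+2(p-1)=5p-2>4p-1$ orbits.

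Your route is more structural: you first interpret the hypothetical equality $4p-1$ as forcing the return map $T_d=s^{p}$ to act as a $4$-cycle on each fibre over a non-diagonal $r$-orbit, which is equivalent to $\Sigma_k\equiv 1$ for all $k\neq 0$. You then derive $\Sigma_k=0$ by a different method, namely by summing $(\ast)$ over $i$ to obtain the closed form $\epsilon_j(z)\equiv R_z+R_{2j-z}+\Sigma_{j-z}$ and feeding it back into $(\ast)$ to collapse everything to a relation among the $\Sigma_k$. Both derivations of $\Sigma_k=0$ are valid; yours is more algebraic and systematic, the paper's more combinatorial. The paper's approach yields the slightly sharper explicit bound $5p-2$, while yours pinpoints exactly which obstruction (the parity of $\Sigma_k$) rules out the extremal case.
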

\begin{proof}
By (3) and (4), note that
$\sigma_{(i,m_1)}(j,m_2)=(\sigma_i(j),\alpha_{(i,j)}(m_2))$, for
some $\alpha_{(i,j)}\colon \{1,2\}\longrightarrow \{ 1,2\}$, for all
$i,j\in X$ and $m_1,m_2\in\{ 1,2\}$. Since $(Y,s)$ is
non-degenerate, $\sigma_{(i,m_1)}(j,1)\neq \sigma_{(i,m_1)}(j,2)$.
Hence $\alpha_{(i,j)}(1)\neq\alpha_{(i,j)}(2)$ and thus
$\alpha_{(i,j)}\in\Sym_2$. Since $(Y,s)$ is square-free,
$\alpha_{(i,i)}=\id$, for all $i\in X$. By Lemma~\ref{conditions},
$\sigma_{(i,m_1)}\sigma_{(j,m_2)}=\sigma_{(\sigma_i(j),\alpha_{(i,j)}(m_2))}\sigma_{(i,m_1)}$.
Hence
$$\sigma_{(i,m_1)}\sigma_{(j,m_2)}(k,m_3)\;=\;\sigma_{(\sigma_i(j),\alpha_{(i,j)}(m_2))}\sigma_{(i,m_1)}(k,m_3).$$
Since
$$\sigma_{(i,m_1)}\sigma_{(j,m_2)}(k,m_3)
\;=\;\sigma_{(i,m_1)}(\sigma_j(k),\alpha_{(j,k)}(m_3)) \;=\;
(\sigma_i(\sigma_j(k)),\alpha_{(i,\sigma_j(k))}\alpha_{(j,k)}(m_3))
$$
and
\begin{eqnarray*}
\sigma_{(\sigma_i(j),\alpha_{(i,j)}(m_2))}\sigma_{(i,m_1)}(k,m_3)&=&\sigma_{(\sigma_i(j),\alpha_{(i,j)}(m_2))}(\sigma_i(k),\alpha_{(i,k)}(m_3))\\
&=&(\sigma_{\sigma_i(j)}\sigma_i(k),\alpha_{(\sigma_i(j),\sigma_i(k))}\alpha_{(i,k)}(m_3)),
\end{eqnarray*}
we have that
\begin{equation}\label{alpha}\alpha_{(i,\sigma_j(k))}\alpha_{(j,k)}=\alpha_{(\sigma_i(j),\sigma_i(k))}\alpha_{(i,k)},
\end{equation}
for all $i,j,k\in X$. For $i=\sigma_j(k)$ and since
$\alpha_{(\sigma_j(k),\sigma_j(k))}=\id$, we get
\begin{equation}\label{alpha2}\alpha_{(j,k)}=\alpha_{(\sigma_{\sigma_j(k)}(j),\sigma_{\sigma_{j}(k)}(k))}\alpha_{(\sigma_j(k),k)},
\end{equation}
for all $j,k\in X$. Hence, by (\ref{alpha2}) and the definition of
$\sigma_x$, we have
\begin{eqnarray*}
\alpha_{(j,k)}
&=&\alpha_{(\sigma_{\sigma_j(k)}(j),\sigma_{\sigma_{j}(k)}(k))}\alpha_{(\sigma_j(k),k)}\\
&=&\alpha_{(\sigma_{2j-k}(j),\sigma_{2j-k}(k))}\alpha_{(2j-k,k)}\\
&=&\alpha_{(3j-2k,4j-3k)}\alpha_{(2j-k,k)}\\
&=&\alpha_{(\sigma_{\sigma_{3j-2k}(4j-3k)}(3j-2k),\sigma_{\sigma_{3j-2k}(4j-3k)}(4j-3k))}\alpha_{(\sigma_{3j-2k}(4j-3k),4j-3k)}\alpha_{(2j-k,k)}\\
&=&\alpha_{(\sigma_{2j-k}(3j-2k),\sigma_{2j-k}(4j-3k))}\alpha_{(2j-k,4j-3k)}\alpha_{(2j-k,k)}\\
&=&\alpha_{(j,k)}\alpha_{(2j-k,4j-3k)}\alpha_{(2j-k,k)},
\end{eqnarray*}
and thus $\alpha_{(2j-k,4j-3k)}=\alpha_{(2j-k,k)}$, for all $j,k\in
X$. Therefore
\begin{equation}\label{alpha3}
\alpha_{(t,2t-k)}=\alpha_{(t,k)},
\end{equation}
for all $t,k\in X$.
\bigskip

{\em Claim:} We shall prove that
$$\prod_{y\in X}\alpha_{(x+y,y)}=\id,$$
for all $x\in X$.
\bigskip

By (\ref{alpha2}) and the definition of $\sigma_i$, we have
\begin{eqnarray*}\prod_{y\in X}\alpha_{(\frac{x}{2}+y,y)}&=&\prod_{y\in X}\alpha_{(\sigma_{\sigma_{\frac{x}{2}+y}(y)}(\frac{x}{2}+y),\sigma_{\sigma_{\frac{x}{2}+y}(y)}(y))}
\alpha_{(\sigma_{\frac{x}{2}+y}(y),y)}\\
&=&\prod_{y\in
X}\alpha_{(\sigma_{x+y}(\frac{x}{2}+y),\sigma_{x+y}(y))}
\alpha_{(x+y,y)}\\
&=&\prod_{y\in X}\alpha_{(\frac{3x}{2}+y,2x+y)} \alpha_{(x+y,y)}\\
&=& \prod_{y\in X}\alpha_{(\frac{3x}{2}+y,2x+y)} \prod_{y\in X} \alpha_{(x+y,y)}
\end{eqnarray*}
By (\ref{alpha3}),
$\alpha_{(\frac{3x}{2}+y,2x+y)}=\alpha_{(\frac{3x}{2}+y,x+y)}=
\alpha_{(\frac{x}{2} +(x+y),x+y})$, for
all $x,y\in X$. Hence
\begin{eqnarray*}
 \prod_{y\in X}\alpha_{(\frac{3x}{2}+y,2x+y)}&=&  \prod_{y\in X}\alpha_{(\frac{x}{2}+x+y,x+y)}\; =\; \prod_{z\in X}\alpha_{(\frac{x}{2}+z,z)} .
\end{eqnarray*}
We thus get that
\begin{eqnarray*}\prod_{y\in X}\alpha_{(\frac{x}{2}+y,y)}&=&
 \prod_{y\in X}\alpha_{(\frac{3x}{2}+y,2x+y)} \prod_{y\in X} \alpha_{(x+y,y)}
 \; =\; \prod_{z\in X}\alpha_{(\frac{x}{2}+z,z)} \prod_{y\in X} \alpha_{(x+y,y)}
\end{eqnarray*}
and  thus
\begin{eqnarray*}
 \prod_{y\in X} \alpha_{(x+y,y)}&=&\id .
\end{eqnarray*}
This proves the Claim.

Let $k$ be a positive integer, let $m_1,m_2\in \{ 1,2\}$ and let
$x,y\in X$. We shall prove by induction on $k$ that
\begin{eqnarray}\label{2k}\lefteqn{s^{2k}((x+y,m_1),(y,m_2))}\\
&=&(((2k+1)x+y,\alpha_{(2kx+y,(2k-1)x+y)}\alpha_{((2k-2)x+y,(2k-3)x+y)}\cdots
\alpha_{(2x+y,x+y)}(m_1)),\notag\\
&&(2kx+y,\alpha_{((2k-1)x+y,(2k-2)x+y)}\alpha_{((2k-3)x+y,(2k-4)x+y)}\cdots
\alpha_{(x+y,y)}(m_2))).\notag\end{eqnarray}

For $k=1$, we have
\begin{eqnarray*}
s^2((x+y,m_1),(y,m_2))
&=&s((\sigma_{x+y}(y),\alpha_{(x+y,y)}(m_2)),(x+y,m_1))\\
&=&s((2x+y,\alpha_{(x+y,y)}(m_2)),(x+y,m_1))\\
&=&((\sigma_{2x+y}(x+y),\alpha_{(2x+y,x+y)}(m_1)),(2x+y,\alpha_{(x+y,y)}(m_2)))\\
&=&((3x+y,\alpha_{(2x+y,x+y)}(m_1)),(2x+y,\alpha_{(x+y,y)}(m_2))).
\end{eqnarray*}
Thus (\ref{2k}) is true for $k=1$. Suppose that $k>1$ and (\ref{2k})
holds for $k-1$. By the induction hypothesis and the definition of
$s$, we have
\begin{eqnarray*}
\lefteqn{s^{2k}((x+y,m_1),(y,m_2))}\\
&=&s^2(((2(k-1)+1)x+y,\\
&&\qquad
\alpha_{(2(k-1)x+y,(2(k-1)-1)x+y)}\alpha_{((2(k-1)-2)x+y,(2(k-1)-3)x+y)}\cdots
\alpha_{(2x+y,x+y)}(m_1)),\\
&&(2(k-1)x+y,
\alpha_{((2(k-1)-1)x+y,(2(k-1)-2)x+y)}\alpha_{((2(k-1)-3)x+y,(2(k-1)-4)x+y)}\cdots
\alpha_{(x+y,y)}(m_2)))\\
&=&s((\sigma_{(2(k-1)+1)x+y}(2(k-1)x+y),\\
&&\qquad\alpha_{((2(k-1)+1)x+y,2(k-1)x+y)}\alpha_{((2(k-1)-1)x+y,(2(k-1)-2)x+y)}\cdots
\alpha_{(x+y,y)}(m_2)),\\
&&((2(k-1)+1)x+y,\\
&&\qquad
\alpha_{(2(k-1)x+y,(2(k-1)-1)x+y)}\alpha_{((2(k-1)-2)x+y,(2(k-1)-3)x+y)}\cdots
\alpha_{(2x+y,x+y)}(m_1)))\\
&=&s(2kx+y,\alpha_{((2k-1)x+y,(2k-2)x+y)}\alpha_{((2k-3)x+y,(2k-4)x+y)}\cdots
\alpha_{(x+y,y)}(m_2)),\\
&&((2k-1)x+y,\alpha_{((2k-2)x+y,(2k-3)x+y)}\alpha_{((2k-4)x+y,(2(k-5)x+y)}\cdots
\alpha_{(2x+y,x+y)}(m_1)))\\
&=&((\sigma_{2kx+y}((2k-1)x+y),
\alpha_{(2kx+y,(2k-1)x+y)})\alpha_{((2k-2)x+y,(2k-3)x+y)}\cdots
\alpha_{(2x+y,x+y)}(m_1),\\
&&(2kx+y,
 \alpha_{((2k-1)x+y,(2k-2)x+y)}\alpha_{((2k-3)x+y,(2k-4)x+y)}\cdots
\alpha_{(x+y,y)}(m_2)))\\
&=&((2k+1)x+y,\alpha_{(2kx+y,(2k-1)x+y)}\alpha_{((2k-2)x+y,(2k-3)x+y)}\cdots
\alpha_{(2x+y,x+y)}(m_1),\\
&&(2kx+y,\alpha_{((2k-1)x+y,(2k-2)x+y)}\alpha_{((2k-3)x+y,(2k-4)x+y)}\cdots
\alpha_{(x+y,y)}(m_2))).
\end{eqnarray*}
Therefore (\ref{2k}) follows by induction.

Now we shall prove that
$s^{2p}((x+y,m_1),(y,m_2))=((x+y,m_1),(y,m_2))$ and therefore the
$s$-orbit $O_{((x+y,m_1),(y,m_2))}$ of $((x+y,m_1),(y,m_2))$ in
$Y^2$ has cardinality less than or equal to $2p$. By (\ref{2k})
\begin{eqnarray*}\lefteqn{s^{2p}((x+y,m_1),(y,m_2))}\\
&=&(((2p+1)x+y,\alpha_{(2px+y,(2p-1)x+y)}\alpha_{((2p-2)x+y,(2p-3)x+y)}\cdots
\alpha_{(2x+y,x+y)}(m_1)),\\
&&(2px+y,\alpha_{((2p-1)x+y,(2p-2)x+y)}\alpha_{((2p-3)x+y,(2p-4)x+y)}\cdots
\alpha_{(x+y,y)}(m_2))).
\end{eqnarray*}
Note that, by the Claim,
$$\prod_{k=1}^p\alpha_{(2kx+y,(2k-1)x+y)}=\prod_{k\in X}\alpha_{(2kx+y,(2k-1)x+y)}=\id$$
and
$$\prod_{k=1}^p\alpha_{((2k-1)x+y,(2k-2)x+y)}=\prod_{k\in X}\alpha_{((2k-1)x+y,(2k-2)x+y)}=\id.$$
Hence $s^{2p}((x+y,m_1),(y,m_2))=((x+y,m_1),(y,m_2))$. Therefore the
orbit $O_{((x+y,m_1),(y,m_2))}$  has cardinality less than or equal
to $2p$, as claimed. Now in $Y^2$ we have $2p$ singleton orbits of
the form $\{((x,s),(x,s))\}$  and $p$ orbits of the form
$$\{((x,1),(x,2)),((x,2),(x,1))\}.$$
Since $4p^2=4p+4p(p-1)$ and the cardinality of
any orbit is less than or equal to $2p$, we have at least
$2p+p+2(p-1)$ orbits. Since $2p+p+2(p-1)>4p-1=2|Y|-1$, the result
follows.
\end{proof}

We  now are ready to prove the main result of this section.

\begin{theorem} \label{minimal}
Let $(X,r)$ be a finite non-degenerate square-free $\SD$ braided
set. Suppose that $|X|>1$ and that the number of $r$-orbits in $X^2$
is $2|X|-1$. Then, up to isomorphism, one of the following holds
\begin{enumerate}
\item $|X|$ is an odd prime and $(X,r)$ is the braided set associated to the dihedral
quandle,
\item $|X|=2$ and $(X,r)$ is the trivial braided set,
\item $X=\{ 1,2,3\}$ and $\sigma_1=\sigma_{2}=\id$, $\sigma_3=(1,2)$.
\end{enumerate}
\end{theorem}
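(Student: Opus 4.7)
My plan is to argue by strong induction on $|X|$; the base case $|X|=2$ forces $\sigma_1=\sigma_2=\id$ by non-degeneracy and square-freeness, giving case (2). For the inductive step I first split on whether every $|X_\sigma|$ equals $1$: if so, Theorem~\ref{minimality} immediately yields case (1). Otherwise, the equality case of Theorem~\ref{SquareFreeMinDim} forces each $|X_\sigma|\in\{1,2\}$ with at least one equal to $2$; writing $k=|\{\sigma:|X_\sigma|=2\}|$ and $l=|\{\sigma:|X_\sigma|=1\}|$, I pass to the retraction $(\bar X,\bar r)$, which is again a non-degenerate square-free $\SD$ braided set with $|\bar X|=k+l<|X|=2k+l$.

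The first substantive step is to check that $(\bar X,\bar r)$ again attains the minimality bound $2|\bar X|-1$. Orbits inside $\bigsqcup_\sigma X_\sigma^2$ total $3k+l$, so $(2|X|-1)-(3k+l)=|\bar X|-1$ off-diagonal $r$-orbits remain, and these project onto the $|\bar X|-1$ off-diagonal $\bar r$-orbits that the argument of Theorem~\ref{SquareFreeMinDim} guarantees from below; hence $\bar X^2$ has exactly $2|\bar X|-1$ orbits. The inductive hypothesis then constrains $(\bar X,\bar r)$ to be case~(1), (2), (3), or trivial with $|\bar X|=1$.

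The crucial auxiliary observation is a size-invariance identity: because $y\mapsto\sigma_x(y)$ restricts to a bijection $X_\sigma\to X_{\sigma_x\sigma\sigma_x^{-1}}$, the function $\sigma\mapsto|X_\sigma|$ is constant on the conjugation orbits of $\mathcal{G}(X,r)$ acting on $\mathcal{S}$. Using this I dispatch each possibility for $(\bar X,\bar r)$. If $|\bar X|=1$, then $(X,r)$ is the trivial solution and the orbit count $|X|+\binom{|X|}{2}=2|X|-1$ forces $|X|=2$, giving case~(2). If $(\bar X,\bar r)$ is case~(2), then $|\mathcal{S}|=2$ with both elements central in $\mathcal{G}(X,r)$, so each $\sigma_A,\sigma_B$ has support contained in the opposite fiber; fiber sizes $(2,2)$ are excluded because $\sigma_A$ and $\sigma_B$ then have disjoint supports of size $\le 2$, so $\sigma_A\sigma_B$ has order $\le 2$ and Corollary~\ref{rn} bounds each cross-orbit by $4$, forcing the $8$ cross-elements to lie in at least $2$ orbits rather than the single cross-orbit permitted by the count $|\mathcal{S}|-1=1$; hence the fiber sizes are $(2,1)$, $|X|=3$, and the permutations must be exactly those of case~(3). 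If $(\bar X,\bar r)$ is case~(1), the dihedral quandle of odd prime order $p$, then $\mathcal{S}$ forms a single $\mathcal{G}(X,r)$-conjugacy class (the $p$ reflections of $D_{2p}$), so size-invariance forces every $|X_\sigma|=2$ and $|X|=2p$, placing $(X,r)$ under the hypotheses of Lemma~\ref{extension} and producing strictly more than $2|X|-1$ orbits---a contradiction.

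Finally, if $(\bar X,\bar r)$ were case~(3), then $\mathcal{S}=\{s_1,s_2,s_3\}$ with $s_1,s_2$ central in $\mathcal{G}(X,r)$ and $s_3s_1s_3^{-1}=s_2$; but centrality of $s_1$ yields $s_3s_1=s_1s_3$, and combined with $s_3s_1=s_2s_3$ this forces $s_1=s_2$, a contradiction. I anticipate the dihedral subcase to be the main obstacle, since bridging the uniform-fiber hypothesis of Lemma~\ref{extension} with the a priori mixed-fiber situation depends essentially on the size-invariance identity together with the fact that the $p$ reflections form a single $\mathcal{G}(X,r)$-conjugacy class.
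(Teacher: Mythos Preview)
Your proof is correct and is organized more economically than the paper's. Both rest on the same three ingredients---Theorem~\ref{minimality}, Lemma~\ref{extension}, and the size-invariance identity $|X_\sigma|=|X_{\sigma_x\sigma\sigma_x^{-1}}|$---but the paper iterates retraction until it reaches an irretractable stage (forced by Theorem~\ref{minimality} to be dihedral) or a trivial stage, and then works backward through one or two levels by explicitly enumerating the possible lifts. Your strong induction on $|X|$ replaces this with a single retraction step and a direct appeal to the inductive hypothesis on $(\bar X,\bar r)$. The gain is most visible when $(\bar X,\bar r)$ falls under case~(3): the paper lists five candidate fiber configurations for the pre-retract and rules each out against the braided relations, whereas you obtain an immediate contradiction from the centrality of $s_1$ together with $s_3s_1s_3^{-1}=s_2$. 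In the dihedral subcase your argument and the paper's coincide in substance: transitivity of $\mathcal{G}(\bar X,\bar r)\cong D_{2p}$ on $\bar X$ combined with size-invariance forces every fiber to have size~$2$, and then Lemma~\ref{extension} finishes.
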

\begin{proof}
Suppose that $|X|=n$. In view of the comment after
Theorem~\ref{SquareFreeMinDim}, we may assume that $X=X_{\sigma_1}
\cup \cdots \cup X_{\sigma _r} \cup X_{\sigma_{r+1}} \cup \cdots
\cup X_{\sigma_k}$,  a disjoint union, where $|X_{\sigma_i}|=2$ for
$i=1,\ldots, r$ and $|X_{\sigma_i}|=1$ for $i=r+1,\ldots, k$. It
follows from the proof of Theorem~\ref{SquareFreeMinDim} that each
of the $2n-1$ $r$-orbits in $X^2$  is of one of the following types:
\begin{itemize}
\item  $\{(i,i)\}$, $i\in X$,
\item $\{ (i,i'), (i',i)\}$ where
$X_{\sigma_j}=\{ i,i'\}$, for $j=1,\ldots, r$, and $i\neq i'$,
\item $O_{(j,1)}$,  where $j\in X_{\sigma_{j}}$ for
$j=2,\ldots ,k$.
\end{itemize}
Recall that $\mbox{Ret}(X,r)$  also is a non-degenerate square-free
$\SD$ braided set $(Y,s)$. Note that $|Y|=k$. Because of the
previous, $Y^2$ is covered also  by $2k-1$ $s$-orbits and, by
Theorem~\ref{SquareFreeMinDim}, these are distinct orbits, so the
retract $\Ret (X,r)$ inherits the minimality assumption on $(X,r)$,
about the number of $r$-orbits in $X^2$.

Then, by Theorem~\ref{minimality}, there exists a positive integer
$m$ such that the $m$-th retract $\Ret^m(X,r)$ is either a trivial
braided set or it  is the braided set associated to a dihedral
quandle of odd prime order. We choose the minimal such number $m$.
If $m=0$ then the assertion follows from Theorem~\ref{minimality}.

So, assume that $m\geq 1$. Suppose first  that
$([X]_m,[r]_m)=\Ret^m(X,r)$ is the braided set associated to the
dihedral quandle of an odd prime order. Clearly $\mathcal{G}(X,r)$
acts transitively on $[X]_m$. Since $X_{\sigma_1} \cup \cdots \cup
X_{\sigma _r}$ is an orbit under the action of the group
$\mathcal{G}(X,r)$ it follows that $r=k$. However, this is in
contradiction with Lemma~\ref{extension}.

Thus, for the remainder of the proof we may assume that $m\geq 1$
and $([X]_m,[r]_m)=\Ret^m(X,r)$ is trivial. Hence $M([X]_m,[r]_m)$
is a free abelian monoid of rank $|[X]_m|$ and thus
$\mbox{dim}(A(K,[X]_m,[r]_m)_2)=\frac{|[X]_m| (|[X]_m|+1)}{2}$.
Because of Theorem~\ref{SquareFreeMinDim} (and the minimality of
$m$) we thus obtain that  $|[X]_m|=2$.

Thus $([X]_m,[r]_m)$ is trivial of cardinality $2$. Put
$([X]_{m-1},[r]_{m-1})=\Ret^{m-1}(X,r)$, where $[X]_0=X$. Since
$[X]_{m-1}$ also has a decomposition as outlined in the beginning of
the proof, one easily verifies that $([X]_{m-1},[r]_{m-1})$ either
is isomorphic to the braided set described in part (3) of the
statement of the result, or $[X]_{m-1}=\{ 1,1',2,2'\}$  and
$\sigma_1\neq\sigma_2$, $\sigma_1=\sigma_{1'}$ either is $\id$ or
$(2,2')$, and $\sigma_2=\sigma_{2'}$ either is $\id$ or $(1,1')$. In
the three cases, it is easy to see that we get  more  than $7$
$[r]_{m-1}$-orbits in $([X]_{m-1})^2$, and thus the minimality
condition is not satisfied in this case, a contradiction.

Hence, we are left to deal with the case that $[X]_{m-1}=\{ 1,2,3\}$
and with corresponding permutations $\sigma_1' =\sigma_2' =\id,
\sigma_3' =(1,2)$,  that is a braided set isomorphic to the braided
set described in part (3) of the statement of the result. If $m>1$
then there are the following possibilities for the set $[X]_{m-2}$,
where $([X]_{m-2},[r]_{m-2})=\Ret^{m-2}(X,r)$:
\begin{enumerate}
\item[(i)] $\{1,1',2,3\}$, or symmetrically $\{1,2,2',3\}$,
\item[(ii)] $\{1,1',2,2',3 \}$,
\item[(iii)] $\{1,2,3,3' \}$,
\item[(iv)] $\{1,2,2',3,3' \}$, or symmetrically $\{1,1',2,3,3' \}$,
\item[(v)] $\{1,1',2,2',3,3' \}$.
\end{enumerate}
Here, in each case we mean that $\sigma_{i}=\sigma_{i'}$ for every
$i$, and $\sigma_1,\sigma_2,\sigma_3$ are distinct.

Since $\sigma_1' =\id$, it follows that $\sigma_1 $ must be one of
the following permutations: $\id, (2,2'), (3,3')$, or $(2,2')(3,3')$.
Similarly, since $\sigma_2 =\id$, we get that $\sigma_2$ is one of
the following permutations: $\id, (1,1'),(3,3')$, or $(1,1')(3,3')$.
Finally, it is easy to see that $\sigma_3 $ has one of the following forms: $ (1,2)$ or
$(1,2)(1',2')$ or $(1,2,1',2')$ (or maybe $1,1'$ and/or $2,2'$
have to be switched).

So,  by Lemma~\ref{conditions}, we always have $\sigma_3\sigma_1 =
\sigma_2 \sigma_3$, because $\sigma_{3}(1)=2$ or $2'$. Thus, only
cases (ii) or (v) are possible ($\sigma_1, \sigma_2$ are conjugate
by $\sigma_3$ but not equal). But,  again by Lemma~\ref{conditions},
$\sigma_1 \sigma_3= \sigma_3 \sigma_1$ because $\sigma_1(3)=3$ or
$3'$. Then $\sigma_1 =\sigma_2$, which is a contradiction. So there
are no braided sets of this type. Therefore, $m=1$ and $(X,r)$ is of
type as described in part (3) of the statement of the result. This
completes the proof of the theorem.
\end{proof}

Note that the dihedral quandles of cardinality an odd prime are
simple quandles (see \cite{J2}).  Thus this also answers Open
Questions~4.3.1 (1) and (2) in \cite{GI2018}. On the other hand, the
non-singleton orbits in $X^3$ under the action of the group $\langle
r_1,r_2\rangle$ in the above type (3) have not the same cardinality.
In fact, $|O_{(1,1,3)}|=12$, $|O_{(1,3,3)}|=6$ and
$|O_{(1,1,2)}|=|O_{1,2,2}|=3$. Therefore this answers in the
negative Open Question 4.1.2 in \cite{GI2018}.

As before, using Proposition~1.4 in \cite{JKV}, we  derive the
main result of this paper, as the following immediate consequence
of Theorem~\ref{minimal}. This  solves another problem of
Gateva-Ivanova (Question~3.18 (2) in \cite{GI2018}).

\begin{corollary} \label{cor-minimal} Let $(X,r)$ be a finite
square-free non-degenerate braided set (not necessarily $\SD$) such
that $|X|>1$ and the number of $r$-orbits in $X^2$ is $2|X|-1$. Then
$|X|$ is a prime number.  Furthermore, its derived solution $(X,r')$
is one of the braided sets described in the statement of
Theorem~\ref{minimal}  and $\dim(A(K,X,r)_m)=\dim(A(K,X,r')_m)$
for every nonnegative integer $m$ and any field $K$.
\end{corollary}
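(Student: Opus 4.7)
The plan is to reduce the claim to Theorem~\ref{minimal} by passing to the derived solution. Given a finite non-degenerate braided set $(X,r)$, Soloviev's construction produces the derived solution $(X,r')$, which is automatically a non-degenerate $\SD$ braided set. So the first step is to verify that $(X,r')$ is also square-free: since $r(x,x)=(x,x)$ forces both $\sigma_x(x)=x$ and $\gamma_x(x)=x$, we have $\gamma_x^{-1}(x)=x$, hence
$$r'(x,x)=(\gamma_x(\sigma_{\gamma_x^{-1}(x)}(x)),x)=(\gamma_x(\sigma_x(x)),x)=(\gamma_x(x),x)=(x,x).$$
Thus $(X,r')$ is a finite square-free non-degenerate $\SD$ braided set.

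The second step is to transfer the minimality hypothesis from $(X,r)$ to $(X,r')$. By Proposition~1.4 in \cite{JKV} there is a degree-preserving bijective $1$-cocycle $\phi\colon M(X,r)\to M(X,r')$. In particular $|M_m|=|M'_m|$ for every $m$, so $\dim(A(K,X,r)_m)=\dim(A(K,X,r')_m)$ for all $m\geq 0$ and any field $K$. In the case $m=2$ this equals the number of $r$-orbits (respectively $r'$-orbits) in $X^2$, so the assumption that there are $2|X|-1$ many $r$-orbits translates to: $(X,r')$ has $2|X|-1$ many $r'$-orbits in $X^2$.

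The third step is simply to invoke Theorem~\ref{minimal} for $(X,r')$, which then must be isomorphic to one of the three listed braided sets. In each of these three cases, $|X|$ is a prime number ($|X|$ equals an odd prime in case (1), $|X|=2$ in case (2), and $|X|=3$ in case (3)). Combined with the equality of graded dimensions established in the previous paragraph, this yields the full statement of the corollary. There is no essential obstacle here: once Theorem~\ref{minimal} and the cocycle result of \cite{JKV} are in hand, the corollary is an immediate consequence of the $\SD$-reduction philosophy used throughout the paper.
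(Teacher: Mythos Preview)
Your proposal is correct and follows exactly the route the paper intends: the corollary is stated as an immediate consequence of Theorem~\ref{minimal} via Proposition~1.4 of \cite{JKV}, and your three steps (square-freeness of $(X,r')$, dimension transfer, then invoking Theorem~\ref{minimal}) make this explicit. The only addition is that you actually verify $(X,r')$ is square-free, which the paper merely asserts (see the first line of the proof of Theorem~\ref{SquareFreeMinDim}); your computation is correct and worth including.
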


\section{Solutions extending cycles}

In \cite{GI2018} Gateva-Ivanova provided several examples of
2-cancellative $\SD$ square-free solutions $(X, r)$  that contain
a given $r$-orbit of cardinality $|X|$. In this section we prove
that such solutions always  exist. Furthermore, if $|X|$ is odd,
we prove that there is precisely one solution. Hence, we give a
complete solution of Problem 4.3.2. in \cite{GI2018}.

The following well-known lemma will be needed.

\begin{lemma}\label{cycle}
Let $X$ be a set of cardinality $n>1$. Let $(x_1,\dots, x_n),
(y_1,\dots, y_n)\in \Sym_X$ be cyclic permutations of length $n$.
Then the permutation $\sigma\in \Sym_X$ defined by
$\sigma(x_i)=y_i$, for all $1\leq i\leq n$, is the unique
permutation such that $\sigma(x_1)=y_1$ and $\sigma (x_1,\dots,
x_n)\sigma^{-1}=(y_1,\dots, y_n)$.
\end{lemma}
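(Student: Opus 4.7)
The plan is to verify well-definedness, existence, and uniqueness separately, all via the elementary conjugation formula $\tau\, (a_1,\ldots,a_n)\,\tau^{-1}=(\tau(a_1),\ldots,\tau(a_n))$ valid for any $\tau\in\Sym_X$.

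First I would observe that, since $(x_1,\ldots,x_n)$ and $(y_1,\ldots,y_n)$ are cycles of length $n$ on a set of cardinality $n$, each of the lists $x_1,\ldots,x_n$ and $y_1,\ldots,y_n$ is an enumeration of all elements of $X$ without repetition. Consequently the assignment $x_i\mapsto y_i$ defines a genuine bijection $\sigma\in\Sym_X$, which disposes of well-definedness.

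Next, existence is immediate: applying the conjugation formula to this $\sigma$ gives
\[
\sigma\,(x_1,\ldots,x_n)\,\sigma^{-1}=(\sigma(x_1),\ldots,\sigma(x_n))=(y_1,\ldots,y_n),
\]
and by construction $\sigma(x_1)=y_1$.

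For uniqueness, suppose $\tau\in\Sym_X$ satisfies $\tau(x_1)=y_1$ and $\tau\,(x_1,\ldots,x_n)\,\tau^{-1}=(y_1,\ldots,y_n)$. The conjugation formula rewrites the second equality as
\[
(\tau(x_1),\ldots,\tau(x_n))=(y_1,\ldots,y_n).
\]
Equality of two cyclic notations for the same $n$-cycle means the second tuple is a cyclic shift of the first, so there exists $k\in\{0,1,\ldots,n-1\}$ with $\tau(x_i)=y_{i+k}$ (indices mod $n$) for all $i$. Specialising to $i=1$ together with the hypothesis $\tau(x_1)=y_1$ forces $y_{1+k}=y_1$, and since $y_1,\ldots,y_n$ are distinct this gives $k=0$. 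Hence $\tau(x_i)=y_i=\sigma(x_i)$ for every $i$, so $\tau=\sigma$.

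There is no real obstacle here; the only point requiring a moment's thought is recording that two cycle notations $(a_1,\ldots,a_n)$ and $(b_1,\ldots,b_n)$ represent the same permutation iff one tuple is a cyclic shift of the other, which is where the normalisation $\sigma(x_1)=y_1$ is used to pin down the shift.
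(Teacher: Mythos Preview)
Your proof is correct and entirely standard. The paper does not actually prove this lemma at all: it is introduced with the phrase ``The following well-known lemma will be needed'' and stated without proof, so there is nothing to compare against beyond noting that your argument supplies exactly the routine verification the authors chose to omit.
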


Here is our final result.

\begin{proposition}\label{sd}
Let $n$ be an integer greater than $1$. Let $X=\{ b_1,\dots ,b_n\}$
be a set of cardinality $n$. Let $\mathcal{O}=\{ (a_i,b_i)\mid 1\leq
i\leq n\}$ be a subset of $X^2$. Let $r_0\in\Sym_{\mathcal{O}}$ be a
cyclic permutation of order $n$:
$$(a_1,b_1)\mapsto_{r_0}(a_2,b_2)\mapsto_{r_0}\cdots\mapsto_{r_0}(a_n,b_n)\mapsto_{r_0}(a_1,b_1),$$
where $\{ a_1,\dots ,a_n\}=X$ and $a_i\neq b_i$, for $1\leq i\leq
n$. Then there exists an extension $r\colon X\times X\longrightarrow
X\times X$ of $r_0$ such that $(X,r)$ is a $2$-cancellative
square-free $\SD$ braided set, with $r(x,y)=(\sigma_x(y),x)$ and
$\sigma_x^2=\id$, for all $x\in X$, if and only if $a_n=b_1$ and
$a_i=b_{i+1}$ for $1\leq i\leq n-1$. Furthermore, in this case, if
$n$ is odd then $(X,r)$  is unique.
\end{proposition}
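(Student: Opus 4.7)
The plan is to prove the proposition in three parts: necessity of the indexing condition, existence of an extension, and uniqueness for odd $n$; the common thread is reduction to the dihedral quandle studied in Section~\ref{quandles}.

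For necessity, in any $\SD$ extension $r$ one has $r(a_i,b_i)=(\sigma_{a_i}(b_i),a_i)$, and matching with $r_0(a_i,b_i)=(a_{i+1},b_{i+1})$ (indices mod $n$) by comparing second coordinates immediately yields $a_i=b_{i+1}$ for $1\leq i\leq n-1$ and $a_n=b_1$. For existence, assume $a_i=b_{i+1}$, identify $b_i\leftrightarrow i\in\mathbb{Z}/(n)$, and set $\sigma_{b_i}(b_j)=b_{2i-j}$ --- the braided set associated with the dihedral quandle. The computation $r(b_{i+1},b_i)=(b_{i+2},b_{i+1})=(a_{i+1},b_{i+1})$ shows that $r$ extends $r_0$; square-freeness, involutivity $\sigma_{b_i}^2=\id$, and Lemma~\ref{conditions}(i) (the only non-trivial braided condition in the $\SD$ setting) follow directly from the formula; and $2$-cancellativity is already available from Example~\ref{dihedral}, Proposition~\ref{quandle2} and Corollary~\ref{rn}, whose orbit formula shows the second coordinate takes distinct values on distinct elements of an $r$-orbit.

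For uniqueness when $n$ is odd, let $r$ be any valid extension, write $\sigma_i=\sigma_{b_i}$, and observe from $\mathcal{O}$, involutivity, and square-freeness that $\sigma_i(b_{i-1})=b_{i+1}$, $\sigma_i(b_{i+1})=b_{i-1}$ and $\sigma_i(b_i)=b_i$. The key identity is Lemma~\ref{conditions}(i) with $x=b_i$, $y=b_{i-1}$:
\[
\sigma_i\sigma_{i-1}=\sigma_{\sigma_i(b_{i-1})}\sigma_i=\sigma_{i+1}\sigma_i .
\]
I would then prove by induction on $\ell\geq 0$ the claim $P(\ell):\sigma_i(b_{i-\ell})=b_{i+\ell}$ for every $i$, with $P(0)$ and $P(1)$ as base cases. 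For the step, evaluate the identity at $b_{i-\ell}$: on the left, $P(\ell-1)$ applied to $\sigma_{i-1}$ gives $\sigma_{i-1}(b_{i-\ell})=b_{i+\ell-2}$, and then $P(\ell-2)$ combined with $\sigma_i^2=\id$ produces $\sigma_i(b_{i+\ell-2})=b_{i-\ell+2}$; on the right, inverting $\sigma_{i+1}$ by means of $\sigma_{i+1}^2=\id$ and invoking $P(\ell-1)$ for $\sigma_{i+1}$ yields $\sigma_i(b_{i-\ell})=b_{i+\ell}$. For odd $n$ the indices $\{i-\ell:0\leq\ell\leq n-1\}$ exhaust $\mathbb{Z}/(n)$, so $\sigma_i(b_j)=b_{2i-j}$ is forced for every $j$, matching the extension constructed in the existence step.

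The main obstacle is the careful bookkeeping in the inductive step: being precise about which instance of the hypothesis is invoked in each of the evaluations of $\sigma_{i-1}$, $\sigma_i$ and $\sigma_{i+1}$, and tracking indices modulo $n$ cleanly. The oddness of $n$ enters only in the last sentence, to guarantee that the dihedral formula forced on the $b_{i-\ell}$ actually determines $\sigma_i$ on all of $X$.
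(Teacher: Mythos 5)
Your proof is correct, and it takes a genuinely different route from the paper's on both the existence and the uniqueness halves. For existence the paper does not simply exhibit the dihedral quandle: it derives the recursion $\sigma_i=\sigma_{i-1}\sigma_{i-2}\sigma_{i-1}$ from Lemma~\ref{conditions}(i), proves $\sigma_i=(\sigma_2\sigma_1)^{i-2}\sigma_2$, computes $\sigma=\sigma_2\sigma_1$ explicitly from the orbit data (it is $x\mapsto x+2$), and then pins down $\sigma_2$ via the conjugation relation $\sigma_2\sigma\sigma_2^{-1}=\sigma^{-1}$ and Lemma~\ref{cycle}; the parity of $n$ enters there because $\sigma$ is a single $n$-cycle for odd $n$ but splits into two cycles for even $n$. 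Your shortcut of just writing down $\sigma_{b_i}(b_j)=b_{2i-j}$ and checking it extends $r_0$ is cleaner, and your appeal to the proof of Proposition~\ref{quandle2} for $2$-cancellativity is legitimate (that proof covers every $n$, not only primes). Your uniqueness argument is also sound: the identity $\sigma_i\sigma_{i-1}=\sigma_{i+1}\sigma_i$ is the correct specialization of Lemma~\ref{conditions}(i), and the two-step induction on $\ell$ with base cases $P(0)$, $P(1)$ goes through exactly as you describe. One remark: your closing sentence misidentifies where oddness is used --- the set $\{i-\ell: 0\leq\ell\leq n-1\}$ exhausts $\Z/(n)$ for \emph{every} $n$, so your induction in fact forces $\sigma_i(b_j)=b_{2i-j}$ for all $n$, odd or even. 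That is not a gap (it proves a statement stronger than the one asserted, and the forced map is indeed always a valid solution), but it is worth noting that your argument is parity-blind, whereas the paper's uniqueness mechanism is intrinsically tied to the cycle decomposition of $\sigma_2\sigma_1$ and is what leads the authors to treat the even case separately.
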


\begin{proof} It is clear that if $(X,r)$ is a
$2$-cancellative square-free $\SD$ braided set, with
$r(x,y)=(\sigma_x(y),x)$ and $\sigma_x^2=\id$, for all $x\in X$, and
the restriction of $r$ to $\mathcal{O}$ is $r_0$, then
 $a_n=b_1$ and $a_i=b_{i+1}$ for $1\leq i\leq n-1$.

Suppose that  $a_n=b_1$ and $a_i=b_{i+1}$ for $1\leq i\leq n-1$. We
may assume  that $X=\{1,2,\ldots ,n\}$ and $b_i=i$, for all $i$.
Hence
$$(2,1)\mapsto_{r_0}(3,2)\mapsto_{r_0}(4,3)\mapsto_{r_0}\cdots\mapsto_{r_0}(n,n-1)\mapsto_{r_0}(1,n)\mapsto_{r_0}(2,1).$$

We shall prove that there exists a $2$-cancellative square-free
$\SD$ braided set $(X,r)$, with $r(x,y)=(\sigma_x(y),x)$ and
$\sigma_x^2=\id$, for all $x\in X$, such that the restriction of $r$
to $\mathcal{O}$ is $r_0$.

Note that the $\sigma_i$ should be elements in $\Sym_X$ such that
$\sigma_i(i-1)=i+1$, for $2\leq i\leq n-1$, $\sigma_n(n-1)=1$ and
$\sigma_1(n)=2$. Furthermore, by Lemma~\ref{conditions} and since
$\sigma _j^2=\id$,  we have
$$\sigma_i=\sigma_{i-1}\sigma_{i-2}\sigma_{i-1},$$
for all $3\leq i\leq n$, $\sigma_1=\sigma_n\sigma_{n-1}\sigma_n$ and
$\sigma_2=\sigma_1\sigma_n\sigma_1$. We shall prove by induction on
$i$ that
\begin{eqnarray}\label{sigma}
&&\sigma_i=(\sigma_2\sigma_1)^{i-2}\sigma_2, \end{eqnarray} for all
$3\leq i\leq n$. For $i=3$, the result is true. For $i=4\leq n$,
$$\sigma_4=\sigma_3\sigma_2\sigma_3
=\sigma_2\sigma_1\sigma_2\sigma_2\sigma_2\sigma_1\sigma_2
=(\sigma_2\sigma_1)^2\sigma_2.
$$
Suppose that $4<i\leq n$ and that (\ref{sigma}) is true for $i-1$
and $i-2$. Now, by the induction hypothesis, we have
\begin{eqnarray*}
\sigma_i&=&\sigma_{i-1}\sigma_{i-2}\sigma_{i-1}\\
&=&(\sigma_2\sigma_1)^{i-3}\sigma_2(\sigma_2\sigma_1)^{i-4}\sigma_2(\sigma_2\sigma_1)^{i-3}\sigma_2\\
&=&(\sigma_2\sigma_1)^{i-3}\sigma_2\sigma_2(\sigma_1\sigma_2)^{i-4}(\sigma_2\sigma_1)^{i-3}\sigma_2\\
&=&(\sigma_2\sigma_1)(\sigma_2\sigma_1)^{i-3}\sigma_2\\
&=&(\sigma_2\sigma_1)^{i-2}\sigma_2.
\end{eqnarray*}
Therefore (\ref{sigma}) follows by induction. Since
$\sigma_1=\sigma_n\sigma_{n-1}\sigma_n$,  a similar argument shows
that
$$\sigma_1=(\sigma_2\sigma_1)^{n-1}\sigma_2.$$
Hence $(\sigma_2\sigma_1)^n=\id$.

Since $(X,r)$ should be square-free, by Corollary~\ref{rn}, we
have that
\begin{equation}\label{eq3}(2k,2k-1)=(a_{2k-1},b_{2k-1})=r^{2(k-1)}(a_1,b_1)=((\sigma_2\sigma_1)^{k-1}(2),(\sigma_2\sigma_1)^{k-1}(1))\end{equation}
and
\begin{equation}\label{eq4}(2k+1,2k)=(a_{2k},b_{2k})=r^{2k-1}(a_1,b_1)=((\sigma_2\sigma_1)^{k}(1),(\sigma_2\sigma_1)^{k-1}(2)),\end{equation}
for all $2\leq k\leq (n-1)/2$. Let
$\sigma=\sigma_2\sigma_1\in\Sym_n$. Note that $\sigma_2\sigma
\sigma_{2}^{-1}=\sigma_2\sigma\sigma_2=\sigma_1\sigma_2=\sigma^{-1}$.

Suppose that $n$ is odd. In this case, $n=2t+1$, for some $t$, and
the above conditions (\ref{eq3}) and (\ref{eq4}) imply that
$$\sigma=(2,4,6,\dots,2t,1,3,5,\dots, 2t+1).$$
Since $\sigma_2(2)=2$ and $\sigma_2\sigma\sigma_2^{-1}=\sigma^{-1}$,
by Lemma~\ref{cycle}, $\sigma_2=(4,2t+1)(6,2t-1)\cdots (2t,5)(1,3)$.
Thus $(X,r)$ is the solution corresponding to the dihedral quandle.

Suppose that $n$ is even. In this case, $n=2t$, for some $t$, and
(\ref{eq3}) and (\ref{eq4}) imply
$$\sigma=(2,4,6,\dots ,2t)(1,3,5,\dots,2t-1).$$
Note that $\sigma^{-1}=(2,2t,2t-2,\dots ,6,4)(2t-1,2t-3,\dots
,3,1)$. By Lemma~\ref{cycle}, since $\sigma_2(2)=2$, the restriction
of $\sigma_2$ to $\{ 2,4,\dots ,2t\}$ should be equal to
$$(4,2t)(6,2(t-1))\cdots
(2\left\lfloor\frac{t+1}{2}\right\rfloor,2\left\lceil\frac{t+3}{2}\right\rceil).$$
Also by Lemma~\ref{cycle}, there are $t$ possibilities for the
restriction of $\sigma_2$ to $\{ 1,3,5,\dots, 2t-1\}$ depending on
the value of $\sigma_2(1)\in \{ 1,3,5,\dots, 2t-1\}$. Thus,
$\sigma_2$ is one of the following permutations.
$$\sigma_{2,j}=(4,2t)(6,2(t-1))\cdots
(2\left\lfloor\frac{t+1}{2}\right\rfloor,2\left\lceil\frac{t+3}{2}\right\rceil)\tau_j,$$
where $\tau_j(x)=x$, for all $x\in\{2,4,6,\dots, 2t\}$,  and
$$\tau_k(1)=2k-1, \tau_k(3)=2k-3,\dots
,\tau_k(2k-1)=1,$$
$$\tau_k(2k+1)=2t-1,\tau_k(2k+3)=2t-3,\dots
,\tau_k(2t-1)=2k+1,$$  for $1\leq k<t$; and
$$\tau_t(1)=2t-1, \tau_t(3)=2t-3,\dots
,\tau_t(2t-1)=1.$$ We shall check for which of these $t$
possibilities $(X,r)$ is a braided set. By Lemma~\ref{conditions} we
should check  when
$$\sigma_x\sigma_y=\sigma_{\sigma_x(y)}\sigma_x,$$
for all $x,y\in X$,  because $\gamma_y =id$ for every $y\in X$.

By (\ref{sigma})
$$\sigma_x\sigma_y=\sigma^{x-2}\sigma_2\sigma^{y-2}\sigma_2=\sigma^{x-y},$$
for all $1\leq x,y\leq n$. If $y$ is even and $4\leq y\leq 2t$, then
$$\sigma_x(y)
=\sigma^{x-2}\sigma_2(y) =\sigma^{x-2}(2t-y+4) \equiv
2t-y+4+2(x-2)\equiv 2x-y\; (\Mod 2t),
$$
and we have that
$$\sigma_{\sigma_x(y)}\sigma_x=\sigma^{2x-y-2}\sigma_2\sigma^{x-2}\sigma_2=\sigma^{x-y}.$$
If $y=2$, then
$$\sigma_x(2)
=\sigma^{x-2}\sigma_2(2) =\sigma^{x-2}(2) \equiv  2+2(x-2)\equiv
2x-2\; (\Mod 2t),
$$
and we have that
$$\sigma_{\sigma_x(2)}\sigma_x=\sigma^{2x-2-2}\sigma_2\sigma^{x-2}\sigma_2=\sigma^{x-2}.$$
Hence
$$\sigma_x\sigma_y=\sigma_{\sigma_x(y)}\sigma_x,$$
for all $x,y\in X$ with $y$ even.

Suppose that $y$ is odd and $\sigma_2(1)=2j-1$. We have
$$\sigma_x(y)
=\sigma^{x-2}\sigma_2(y) \equiv 2(x-2)+\tau_j(y)\equiv 2(x-2)+2j-y\;
(\Mod 2t)
$$
and  we have that
$$\sigma_{\sigma_x(y)}\sigma_x=\sigma^{2x+2j-y-6}\sigma_2\sigma^{x-2}\sigma_2=\sigma^{x+2j-y-4}.$$
Thus
$$\sigma_x\sigma_y=\sigma_{\sigma_x(y)}\sigma_x,$$
if and only if $\sigma^{x-y}=\sigma^{x-y+2j-4}$.  Hence $(X,r)$ is a
braided set if and only if $\sigma_2=\sigma_{2,j}$ and $2j-4\equiv 0
\; (\Mod t)$, because $\sigma $ has order $t$. If $t$ is odd, then
this happens if and only if $j=2$. If $t$ is even, then this happens
if and only if either $j=2$ or $j=2+t/2$.

Hence we have proved that if $n\equiv 2\; (\Mod 4)$, then $(X,r)$
with $r(x,y)=(\sigma_x(y),x)$, $\sigma_2=\sigma_{2,2}$,
$\sigma=(2,4,6,\dots ,2t)(1,3,5,\dots,2t-1)$ and
$\sigma_x=\sigma^{x-2}\sigma_2$, for all $x\in X$, is the unique
square-free $\SD$ braided set such that the restriction of $r$ to
$\mathcal{O}$ is $r_0$.

We have also proved that if $n\equiv 0\; (\Mod 4)$, then $(X,r)$
with $r(x,y)=(\sigma_x(y),x)$, $\sigma_2=\sigma_{2,2}$,
$\sigma=(2,4,6,\dots ,2t)(1,3,5,\dots,2t-1)$ and
$\sigma_x=\sigma^{x-2}\sigma_2$, for all $x\in X$, is a square-free
$\SD$ braided set such that the restriction of $r$ to $\mathcal{O}$
is $r_0$. But in this case, we also obtain another square-free $\SD$
braided set $(X,s)$  such that the restriction of $s$ to
$\mathcal{O}$ is $r_0$, with $s(x,y)=(\sigma_x(y),x)$,
$\sigma_2=\sigma_{2,2+t/2}$ and $\sigma=(2,4,6,\dots
,2t)(1,3,5,\dots,2t-1)$ and $\sigma_x=\sigma^{x-2}\sigma_2$, for all
$x\in X$.

To finish the proof it remains to prove that the three types of
braided sets that we have obtained are $2$-cancellative.

Let $(X,r)$ be one of the three types of braided sets  that we
have obtained with $|X|=n$. We know that
$\sigma_x\sigma_y=\sigma^{x-y}$. Since $(X,r)$ is square-free
non-degenerate, by Corollary~\ref{rn}, for every positive integer
$k$,
\begin{eqnarray*}
r^{2k-1}(x,y)&=&((\sigma_x\sigma_y)^k(y),(\sigma_x\sigma_y)^{k-1}(x))=(\sigma^{k(x-y)}(y),\sigma^{(k-1)(x-y)}(x)),\\
r^{2k}(x,y)&=&((\sigma_x\sigma_y)^k(x),(\sigma_x\sigma_y)^{k}(y))=(\sigma^{k(x-y)}(x),\sigma^{k(x-y)}(y)).
\end{eqnarray*}
Assume for example that $xy=xy'$ in $M(X,r)$, for some $x,y,y'\in X$
(the other case is symmetric). Then we have to look at two possible
cases: either $(x,y')= r^{2k-1}(x,y)$ or  $(x,y') = r^{2k}(x,y)$,
for some $k$. In the former case this means that $\sigma^{k(x-y)}
(y)= x$. And we need to prove that $y'=\sigma^{(k-1)(x-y)} (x)= y$.
But $x= \sigma^{k(x-y)}(y)$ means (by the definition of $\sigma$)
that $x\equiv  y + 2k(x-y)\; (\Mod n)$. Then $y\equiv 2(k-1)(x-y)
+x\; (\Mod n)$, which implies that  $y= \sigma^{(k-1)(x-y)} (x)$.
And in the latter case we have $\sigma^{k(x-y)} (x) =x$ and we need
to prove that $y'= \sigma^{k(x-y)}(y) =y$. So, by the definition of
$\sigma$, $x\equiv x+2k(x-y)\; (\Mod n)$ and then $y'\equiv
y+2k(x-y)\equiv y\; (\Mod n)$. Hence $\sigma^{k(x-y)}(y) =y$, as
desired.

So $2$-cancellativity follows and the proof of the proposition is
completed.
\end{proof}

\vspace{30pt}
 \noindent \begin{tabular}{llllllll}
  F. Ced\'o && E. Jespers\\
 Departament de Matem\`atiques &&  Department of Mathematics \\
 Universitat Aut\`onoma de Barcelona &&  Vrije Universiteit Brussel \\
08193 Bellaterra (Barcelona), Spain    && Pleinlaan 2, 1050 Brussel, Belgium \\
 cedo@mat.uab.cat && Eric.Jespers@vub.be \\ \\
 J. Okni\'{n}ski && \\ Institute of
Mathematics &&
\\  Warsaw University &&\\
 Banacha 2, 02-097 Warsaw, Poland &&\\
 okninski@mimuw.edu.pl &&
\end{tabular}


\begin{thebibliography}{99}
\itemsep=-2pt
\bibitem{AG} N. Andruskiewitsch and M. Gra\~na, From racks to
pointed Hopf algebras, Adv. Math. 178 (2003), 177--243.
\bibitem{B2} D. Bachiller, Counterexample to a conjecture about
braces, J. Algebra 453 (2016), 160--176.
\bibitem{B3} D. Bachiller, Extensions, matched products and simple braces,
J.  Pure  Appl. Algebra 222 (2018), 1670--1691.
\bibitem{B4} D. Bachiller, Solutions of the Yang-Baxter equation
associated to skew left braces, with applications to racks, J. Knot
Theory Ramifications 27 (2018), no. 8, 1850055, 36 pp.
\bibitem{BCJ} D. Bachiller, F. Ced\'o and E. Jespers, Solutions of
the Yang-Baxter equation associated with a left brace,  J. Algebra
463 (2016), 80--102.
\bibitem{BCJO} D. Bachiller, F. Ced\'o, E. Jespers and J.
Okni\'{n}ski, Iterated matched products of finite braces and
simplicity; new solutions of the Yang--Baxter equation, Trans.
Amer. Math. Soc. 370 (2018), 4881--4907.
\bibitem{BCJO2} D. Bachiller, F. Ced\'o, E. Jespers and J.
Okni\'{n}ski, Asymmetric product of left braces and simplicity;
new solutions of the Yang--Baxter equation, Communications in
Contemporary Mathematics, to appear, arXiv:1705.08493v1[math.QA].
\bibitem{BCV} D. Bachiller, F. Ced\'o and L. Vendramin, A characterization of finite
multipermutation solutions of the Yang-Baxter equation, Publ. Mat.
62 (2018), no. 2, 641--649.
\bibitem{CS} J. S. Carter, D. Jelsovsky, S. Kamada, L. Langford and M. Saito, Quandle cohomology and state-sum
invariants of knotted curves and surfaces, Trans. Amer. Math. Soc.
355 (2003), no. 10, 3947--3989.
\bibitem{CCS} F. Catino, I. Colazzo and P. Stefanelli, Regular
subgroups of the afine group and asymmetric product of braces,  J.
Algebra 455 (2016), 164--182.
\bibitem{CJO04} F. Ced\'o, E. Jespers and J. Okni\'{n}ski, Semiprime
quadratic algebras of Gelfand-Kirillov dimension one, J. Algebra
Appl. 3 (2004), 283--300.
\bibitem{CJO06} F. Ced\'o, E. Jespers and J. Okni\'{n}ski, The Gelfand-Kirillov dimension
of quadratic algebras satisfying the cyclic condition, Proc. Amer.
Math. Soc. 134 (2006), 653--663.
\bibitem{CJO2} F. Ced\'o, E. Jespers
and J. Okni\'{n}ski, Braces and the
Yang-Baxter equation, Commun. Math. Phys. 327 (2014), 101--116.
\bibitem{CO} F. Ced\'o and  J. Okni\'nski, Gr\"obner bases for quadratic algebras of skew type,
Proc. Edinb. Math. Soc. (2) 55 (2) (2012), 387--401.
\bibitem{Ch} L. N. Childs, Skew braces and the Galois correspondence for Hopf Galois structures,
J. Algebra 511 (2018), 270--291.
\bibitem{drinfeld} V. G. Drinfeld, On some unsolved problems in quantum group theory.
Quantum Groups, Lecture Notes Math. 1510, Springer-Verlag, Berlin,
1992, 1--8.
\bibitem{ESS} P. Etingof, T. Schedler and A.  Soloviev,  Set-theoretical solutions
to the quantum Yang-Baxter equation, Duke Math. J. 100 (1999),
169--209.
\bibitem{GI96}  T. Gateva-Ivanova, Skew polynomial rings with binomial relations,
J. Algebra 185 (1996), no. 3, 710--753.
\bibitem{GI2012} T. Gateva-Ivanova, Quadratic algebras, Yang-Baxter equation, and Artin-Schelter
regularity, Adv. Math. 230 (2012), 2152--2175.
\bibitem{GIAdv2018} Gateva-Ivanova, Set-theoretic solutions of the Yang--Baxter equation, braces and symmetric groups,
Adv.  Math. 338 (2018), 649--701.
\bibitem{GI2018} T. Gateva-Ivanova, A combinatorial approach to
noninvolutive set-theoretic solutions of the Yang-Baxter equation, preprint
arXiv:1808.03938v1 [math.QA].
\bibitem{GIJO} T. Gateva-Ivanova, E. Jespers and J. Okni\'{n}ski,
Quadratic algebras of skew type and the underlying monoids, J.
Algebra 270 (2003), 635--659.
\bibitem{GIM} T. Gateva-Ivanova and S. Majid, Matched pairs approach to set theoretic solutions of the Yang-Baxter equation,
J. Algebra 319 (2008), no. 4, 1462--1529.
\bibitem{GIVdB} T. Gateva-Ivanova and M. Van den Bergh,
  Semigroups of $I$-type, J. Algebra 206 (1998), 97--112.
\bibitem{Ven1} L. Guarnieri  and L. Vendramin, Skew braces and the Yang-Baxter equation,
Math. Comp. 86 (2017), no. 307, 2519--2534.
\bibitem{JKV} E. Jespers, L. Kubat and A. Van Antwerpen, The structure monoid and algebra
of a non-degenerate set-theoretic solution of the Yang--Baxter
equation, Trans. Amer. Math. Soc., DOI: 10.1090/tran/7837.
\bibitem{JObook} E. Jespers and J. Okni\'{n}ski,  Noetherian
Semigroup Algebras, Springer, Dordrecht, 2007.
\bibitem{JVC} E. Jespers and M. Van Campenhout, Finitely generated algebras defined by
homogeneous quadratic monomial relations and their underlying
monoids II, J. Algebra 492 (2017), 524--546.
\bibitem{J1} D. Joyce, A classifying invariant of knots, the knot quandle, J. Pure Appl. Algebra 23 (1982), 37--65.
\bibitem{J2} D. Joyce, Simple quandles, J. Algebra 79 (1982),
307--318.
\bibitem{Lebed} V. Lebed, Cohomology of idempotent braidings with applications to factorizable monoids, Internat. J. Algebra Comput.
27 (2017), 421--454.
\bibitem{LV} V. Lebed and L. Vendramin, Homology of left non-degenerate
set-theoretic solutions to the Yang-Baxter equation, Adv. Math. 304
(2017), 1219--1261.
\bibitem{LebedVendramin} V. Lebed and L. Vendramin, On the structure groups of set-theoretic
solutions to the Yang-Baxter equation, Proc. Edinburgh Math. Soc. 1--35, doi:10.1017/S0013091518000548
\bibitem{LYZ} J. Lu, M. Yan, and Y. Zhu, On the set-theoretical Yang--Baxter equation,
Duke Math. J. 104 (2000), 1--18.
\bibitem{polpos} A. Polishchuk and L. Positselski, Quadratic Algebras, University Lectures Series,
v.37, American Mathematical Society, 2005.
\bibitem{Rump} W. Rump, Braces, radical rings, and the quantum Yang-Baxter
equation, J. Algebra 307 (2007), 153--170.
\bibitem{Smok} A. Smoktunowicz, On Engel groups, nilpotent groups, rings,
braces and the Yang-Baxter equation,  Trans. Amer. Math. Soc.  370
(2018), 6535--6564.
\bibitem{SmSm} Agata Smoktunowicz and Alicja Smoktunowicz, Set-theoretic solutions of
the Yang-Baxter equation and new classes of R-matrices, Linear
Algebra Appl. 546 (2018), 86--114.
\bibitem{SV} A. Smoktunowicz and L. Vendramin, On skew braces (with an appendix by
N. Byott and L. Vendramin), J. Comb. Algebra 2 (2018), no. 1,
47--86.
\bibitem{Soloviev} A. Soloviev, Non-unitary set-theoretical solutions to the Quantum
Yang-Baxter Equation, Math. Res. Lett. 7 (2000),
577--596.
\end{thebibliography}
 \end{document}